\numberwithin{equation}{section}
\newtheorem{dummy}{dummy}[section]
\newtheorem{theorem}[dummy]{Theorem}
\newtheorem{corollary}[dummy]{Corollary}
\newtheorem{lemma}[dummy]{Lemma}
\newtheorem{proposition}[dummy]{Proposition}
\def\Co{\mathbb C}
\def\R{\mathbb R}
\def\Q{\mathbb Q}
\def\Z{\mathbb Z}
\def\e{\varepsilon}
\def\t{\theta}
\def\l{\lambda}
\def\={\;=\;}
\def\+{\,+\,}
\def\-{\,-\,}
\def\bal{\begin{aligned}}
\def\eal{\end{aligned}}
\def\be{\begin{equation}\label}
\def\ee{\end{equation}}
\def\L{{\mathcal L}}
\def\re{{\rm Re}}
\def\ch3{\chi_{-3}}
\def\E3{E_{3,\ch3}}
\def\EE3{\widetilde{E}_{3,\ch3}}
\title{Linear Mahler Measures and Double L-values of Modular Forms}
\author{Evgeny Shinder, Masha Vlasenko}
\begin{document}
\maketitle

\begin{abstract}We consider the Mahler measure of the polynomial $1+x_1+x_2+x_3+x_4$, which is the first case not yet evaluated explicitly. 
A conjecture due to F. Rodriguez-Villegas represents this Mahler measure as a special value at the point~4 of the L-function of a modular form of weight~3. 
We prove that this Mahler measure is equal to a linear combination of double L-values of certain meromorphic modular forms of weight 4.
\end{abstract}

\section{Introduction}\label{sec:intro}

The logarithmic Mahler measure of a Laurent polynomial
\[
P(x_1,\dots,x_n) \in \Co[x_1^{\pm 1},\dots,x_n^{\pm 1}]
\]
is defined as
\[
m(P) = \frac{1}{(2\pi i)^n}\int_{|x_1|=\dots=|x_n|=1} \log|P(x_1,\dots,x_n)| \;
\frac{dx_1}{x_1} \dots \frac{dx_n}{x_n}.
\]
One can show that this integral is always convergent. For a monic polynomial in
one variable $P \in \Co[x]$ one can compute $m(P)$ by Jensen's formula
\be{Jensen}
\frac{1}{2\pi i}\int_{|x|=1} \log|P(x)| \; \frac{dx}{x} \= \sum_{\alpha:
P(\alpha)=0} \max(0,\log|\alpha|)\,,
\ee
but no explicit formula is known for polynomials in several variables. Let us
consider the simplest case of linear forms, namely $m(1+x_1+\dots+x_n)$.  In
1981 C.~Smyth discovered~(\cite{Sm1}) that
\be{2var}
m(1+x_1+x_2) \= \frac{3\sqrt{3}}{4\pi}L(\chi_{-3},2)
\ee
where $\chi_{-3}(n)=\bigl(\frac{-3}{n}\bigr)$, $L(\chi_{-3},s) \=
\sum\limits_{n=1}^{\infty} \dfrac{\chi_{-3}(n)}{n^s} \= 1 - \frac1{2^s} +
\frac1{4^s} - \frac1{5^s} + \dots$ and
\be{3var}
m(1+x_1+x_2+x_3) \= \frac{7}{2\pi^2} \zeta(3) \,.
\ee
These formulas can be proved by explicit integration. Later we will see another
method due to F.~Rodriguez-Villegas~(\cite{MMM}) to obtain~\eqref{2var}
and~\eqref{3var} with the help of modular forms. Already in the next case no
explicit formula for $m(1+x_1+x_2+x_3+x_4)$ is known, and this is the subject of
the present paper. One can find in~\cite{VTV} the numerical value 
\[
m(1+x_1+x_2+x_3+x_4) \= 0.544412561752185...
\]
and also there is the following conjectural formula.   

\vskip0.5cm
{\bf Conjecture} (F. Rodriguez-Villegas,~\cite{BLVD}, see also \cite{Zud}):\emph{
\[
 m(1+x_1+x_2+x_3+x_4) \; \overset{?}= \; 6 \, \Bigl(\frac{\sqrt{-15}}{2 \pi
i}\Bigr)^5 L(f_{15},4)
\]
where 
\[
f_{15} \= \eta(3z)^3\eta(5z)^3 + \eta(z)^3\eta(15z)^3 \=  q + q^2 - 3q^3 - 3q^4
+ \dots
\]
is a CM modular form of weight 3, level 15 and Nebentypus $\bigl(
\frac{-15}{\cdot}\bigr)$.} 
\vskip0.5cm

This modular form arises in~\cite{PTV} in relation to the variety
\[
\begin{cases}
1+x_1+x_2+x_3+x_4 \= 0 \\
1+\frac1{x_1}+\frac1{x_2}+\frac1{x_3}+\frac1{x_4} \= 0
\end{cases} 
\]   
which can be compactified to a $K3$ surface of Picard rank 20. Namely, C.~Peters,
J. Top, M. van der Vlugt show in~\cite{PTV} that if $X$ is the minimal resolution of
singularities of the above surface then the L-function of $H^2(X)$ has generic
Euler factor
\[
(1-pT)^{16} \, \Bigl(1 - \bigl( \frac{-3}{p}\bigr) pT\Bigr)^4 \, \Bigl(1 - A_p T
+ \bigl( \frac{-15}{p}\bigr) p^2 T^2 \Bigr) 
\] 
where $A_p$ is the $p$th coefficient in the $q$-expansion of $f_{15}$. 

In order to state our results, consider the modular function $t(z)$ and modular form $f(z)$ of
weight 2 
\be{L3mpar}\bal
t(z) &\= -\Bigl(\frac{\eta(2z) \eta(6z)}{\eta(z) \eta(3z)}\Bigr)^6 \= -q - 6 q^2
- 21 q^3 + \dots\\
f(z) &\= \frac{(\eta(z) \eta(3z))^4}{(\eta(2z) \eta(6z))^2} \= 1 - 4q +  4q^2 -
4 q^3 +\dots
\eal\ee
for the group
\[
\Gamma_0(6)+3 \= \Gamma_0(6) \; \cup \; \Bigl\{ \sqrt{3} \begin{pmatrix} a& b/3
\\ 2 c & d \end{pmatrix} \in {\rm SL}(2,\R) \,|\, a,b,c,d \in \Z \Bigr\} \,.
\]
Throughout the paper we use the differential operator $D=\frac1{2\pi
i}\frac{d}{dz} \= q \frac{d}{dq}$. We need the following modular forms of
weight~4
\be{thm2gs}\bal
g_1 &\= \frac{Dt}{t} f \=  1+2q-14q^2+38q^3-142q^4+252q^5-266q^6+\dots \\
g_2 & \= \frac{t}{1-t} \, g_1 \= -q-7q^2-6q^3+5 q^4+120 q^5 +498 q^6 + \dots \\
g_3 & \= \frac{t(212 t^2 + 251t - 13)}{(1-t)^3} \, g_1 \=
13q+316q^2+2328q^3+\dots \\
\eal\ee
Here $g_1$ is indeed a modular form and one can write it as a linear combination
of Eisenstein series (see~\eqref{thm2g1}), while $g_2$ and $g_3$ have poles at
the discrete set of points where $t(z)=1$. Our main result is the following.

\vskip0.5cm
{\bf Theorem.} \emph{ Consider the Chowla-Selberg period for the field
$K=Q(\sqrt{-15})$ 
\be{ChS15}
\Omega_{15} \= \frac{1}{\sqrt{30\pi}} \bigl( \prod_{j=1}^{14}
\Gamma\bigl(\frac{j}{15}\bigr)^{(\frac{-15}{j})} \bigr)^{1/4} \,, 
\ee
and the two numbers
\be{thm2DLs}
L(g_j,g_1,3,1) \= (2 \pi)^4 \, \int_{0}^{\infty} g_1(i s)
\int_{s}^{\infty}\int_{s1}^{\infty}\int_{s2}^{\infty} g_j(i s_3) \, ds_3 \, ds_2
\, ds_1 \, ds 
\ee
for $j=2,3$. One has
\[\bal
m(1+x_1+x_2+&x_3+x_4) \- \frac45 \; m(1+x_1+x_2+x_3) \\
& \= \frac{3\sqrt{5}\Omega_{15}^2}{20 \pi} L(g_3,g_1,3,1) \- 
\frac{3\sqrt{5}}{10 \pi^3 \Omega_{15}^2} L(g_2,g_1,3,1)\,.
\eal\]}
\vskip0.5cm

The reader will find this statement in a slightly different notation in Corollary~\ref{cor2}. First, let us explain why the integrals
in~\eqref{thm2DLs} converge. For $z \in i \R_+$ both $t(z)$ and $f(z)$ are
real-valued and one can easily check that $t(z)<0$. Therefore $g_2$ and $g_3$
have no poles along the imaginary half-axis. When $s \to \infty$ we have $g_1(i
s) = O(1), g_2(i s) = O(e^{-2 \pi s})$ and $g_3(i s) = O(e^{-2 \pi s})$ because
$q$-expansions of $g_2$ and $g_3$ start in degree~1, therefore the integrated
integrals above are convergent at $\infty$. Also one can show that all three
functions $g_j(i s)$ are $o(s)$ when $s \to 0$, hence they are globally bounded
and there is no problem with convergence at $s=0$. With the help of PARI/GP we
find that numerically
\[\bal
& L(g_2,g_1,3,1) \= -0.44662442...\\
& L(g_3,g_1,3,1) \= 8.5383217...
\eal\]
which agrees with the statement of the theorem.

The geometric meaning of these two numbers is not clear at the moment. If for example $g_2$ were a holomorphic cusp form then the number defined in~\eqref{thm2DLs} would be indeed the value of the corresponding double L-function $L(g_2, g_1, s_2, s_1)$ at $s_2 = 3, s_1=1$, which is the motivation
for our notation. We discuss double L-values of holomorphic modular forms in
Section~\ref{sec:dL}. But as soon as forms under consideration have poles in the
upper half-plane the corresponding multiple integrals become path-dependent and
there is no general theory of multiple L-values. Also we would like to remark
that for the holomorphic modular form $g_1$ in our theorem one has
$m(1+x_1+x_2+x_3)=-\frac12L(g_1,1)$, the reader can find the proof of this
statement in Section~\ref{sec:mp}. Another observation is that the poles of
$g_2$ ant $g_3$ are located at the points from the same field
$K=\Q(\sqrt{-15})$, namely at the images of $z=\frac18+\frac{\sqrt{-15}}{24}$
under the group $\Gamma_0(6)+3$.   

\medskip

The structure of the paper is as follows. 
Sections 2 and 3 follow the approach pioneered by Rodriguez-Villegas \cite{MMM}.
In Section 2 we relate the Mahler measure of $1+x_1+\dots+x_n$ to the
principal period of a pencil of Calabi-Yau varieties of dimension $n-1$ given by
\[
\Bigl(1 + x_1 + \dots + x_n \Bigr)\Bigl( 1 + \frac1{x_1} + \dots +
\frac1{x_n} \Bigr) \= \lambda
\]
and the corresponding Picard-Fuchs differential equation. One needs to do explicitly analytic continuation of its solutions from one singular point to another one in order to compute the Mahler measure. For $n=2,3$ the differential operators appear to have modular parametrization. This allows us to do necessary analytic continuation and derive~\eqref{2var} and~\eqref{3var} in Section~3. 

When $n=4$ the Picard-Fuchs differential operator is not modular. However one can apply Jensen's formula to reduce the number of variables:
in Section~4 we observe that in fact $m(1+x_1 + \dots + x_n)$ can be computed by analytic continuation of a solution of a non-homogeneous differential equation with the Picard-Fuchs differential operator corresponding to $m(1+x_1 + \dots + x_{n-1})$. A non-homogeneous differential equation arises if one considers the generating function for the moments of a solution
of a homogeneous differential equation along a path. Moreover, the differential operator depends only on the initial differential equation being independent of the particular solution and the path, while the right-hand side depends on this data (Proposition~\ref{de_transform}). In Section~6 we discuss a modular interpretation of solutions to a non-homogeneous equation in the case when the differential operator has modular parametrization and show that double L-values of modular forms appear naturally in this context.

Though our main interest is the case $n=4$, we keep applying our technique parallelly to the case $n=3$ throughout the paper (Theorem~\ref{Thm1} and Corollary~\ref{cor1}). 
This leads to a linear relation (\ref{L_relation}) between a double $L$-value of two Eisenstein series of weight~3, and ordinary $L$-values
$L(\chi_3,2)$ and $\zeta(3)$.
We give a direct proof of this relation in Section~7 using a method due to Zudilin \cite{Z2}, \cite{Z3}. 

\medskip

Our original interest in Mahler's measure came from the beautiful paper \cite{MMM}
which has been inspiring us the whole time we were working on this project.
We would like to thank 
our friends and colleagues Sergey Galkin, Vasily Golyshev, Anton Mellit, Maxim Smirnov and Wadim Zudilin 
for their interest in our work.
Both authors are greatful to the Max-Planck-Institute f\"ur Mathematik
in Bonn for providing wonderful working conditions where a significant part of this work has been done. We would like to
express our gratitude to the referee of the manuscript who read it
carefully and helped us to improve the exposition.

\section{Mahler Measures and Differential Equations}\label{sec:mmde}

For a Laurent polynomial $P(x_1,\dots,x_n)$ the function
\[
a(t) \= \frac{1}{(2\pi i)^n}\int_{|x_1|=\dots=|x_n|=1} \frac1{1-t
P(x_1,\dots,x_n)} \; \frac{dx_1}{x_1} \dots \frac{dx_n}{x_n}
\]
is well defined for small $t$ since $|P|$ is bounded on the torus. We call
$a(t)$ the principal period of $P$. It is the generating function for the
sequence 
\be{cterms}
a_m \= \text{ the constant term of } P(x_1,\dots,x_n)^m 
\ee
since
\[\bal
a(t) &\= \sum_{m=0}^{\infty} t^m \; \frac{1}{(2\pi
i)^n}\int_{|x_1|=\dots=|x_n|=1} P(x_1,\dots,x_n)^m \; \frac{dx_1}{x_1} \dots
\frac{dx_n}{x_n} \\
& \= \sum_{m=0}^{\infty} a_m \, t^m\,.
\eal\]

Suppose that the polynomial $P$ takes only nonnegative real values on the torus $\{ |x_i|=1 \}$. Then the 
Mahler measure $m(P)$ can be computed as follows. For small real $t < 0$ one has
\[\bal
m(P-\frac1t) & \= \frac1{(2 \pi i)^n} \int_{|x_i|=1} \log\bigl(P(x_1,\dots,x_n) - \frac 1 t \bigr) \;
\frac{dx_1}{x_1} \dots \frac{dx_n}{x_n} \\
& \= \frac1{(2 \pi i)^n} \int_{|x_i|=1} \log\bigl(- \frac 1 t (1 - t P(x_1,\dots,x_n) )  \bigr) \;
\frac{dx_1}{x_1} \dots \frac{dx_n}{x_n} \\
&\= - \log(-t) - \sum_{m=1}^{\infty} \frac{t^m}{m} \frac1{(2 \pi i)^n}
\int_{|x_i|=1} P(x_1,\dots,x_n)^m \; \frac{dx_1}{x_1} \dots \frac{dx_n}{x_n} \\
&\= - \log(-t) - \sum_{m=1}^{\infty} \frac{t^m}{m} a_m \= - \bigl( t \frac{d}{dt}\bigr)^{-1} a(t)\,.
\eal\]
Though we did the computation only for small real $t<0$, the first integral here and the terminal expression are holomorphic in $t$ and defined in some neighbourhood of the real negative half-axis (apart from possibly finitely many punctures where $a(t)$ has singularities). Therefore  
\be{ancont}
m(P) \= - \re \; \bigl( t \frac{d}{dt}\bigr)^{-1} a(t) \Big|_{t = \infty}\,,
\ee
where the analytic continuation is done along $-\infty < t < 0$ and we added real part to be independent of the branch of $\log(t)$, i.e. we can now assume throughout the paper that 
\[
\bigl( t \frac{d}{dt}\bigr)^{-1}\; \sum_{m=0}^{\infty} a_m t^m \= a_0 \log t \+ \sum_{m=1}^{\infty} \frac{a_m}{m} t^m \,.
\]

On the other hand, it is known (\cite{SB}) that the sequence~\eqref{cterms}
always satisfies a recursion, i.e. $a(t)$ is a solution to an ordinary
differential equation 
\be{de}
\L\Bigl(t, t\frac{d}{dt}\Bigr) a(t) \= 0    
\ee
where $\L$ is a certain polynomial in two non-commuting variables. Finally we
see that the Mahler measure $m(P)$ can be computed by doing analytic continuation of
a particular solution to an ordinary differential equation which one constructs
from the polynomial $P$.  

Let us apply this strategy to the linear polynomials. Observe that 
\[
m(1 + x_1 + \dots + x_n ) \= \frac12 m( P_n ) 
\]
where
\be{Pn}
P_n \= \Bigl(1 + x_1 + \dots + x_n \Bigr)\Bigl( 1 + \frac1{x_1} + \dots +
\frac1{x_n} \Bigr)
\ee
takes nonnegative real values on the torus. Consider the sequence of the constant terms of the powers of $P_n$:
\[\bal
&n=2 \qquad a_m: 1\,,\;3\,,\;15\,,\; 93 \,,\; 639 \; \dots \\
&n=3 \qquad a_m: 1\,,\;4\,,\;28\,,\; 256 \,,\;2716 \; \dots \\
&n=4 \qquad a_m: 1\,,\;5\,,\;45\,,\; 545 \,,\;7885 \; \dots \\
\eal\]

The corresponding differential equations 
\[
\L_n\bigl(t, t \frac{d}{dt}\bigr)\, a(t) \= 0
\]
are given by
\[\bal
&\L_2(t,\t) \= \t^2 \- t(10 \t^2 + 10\t + 3) \+ 9t^2(\t + 1)^2 \\
\\
&\L_3(t,\t) \= \t^3 \- 2t(2 \t + 1)(5 \t^2+5\t+2) \+ 64t^2(\t + 1)^3 \\
\\
&\L_4(t,\t) \= \t^4 \-
t (35 \t^4 + 70 \t^3 + 63 \t^2 + 28\t + 5) \\
&\qquad\qquad\+ t^2 (\t+1)^2 (259\t^2+518\t+285) \- 225 t^3 (\t+1)^2 (\t+2)^2 
\eal\]
(See \cite{Ver1} for the general form of the operator.)
We use the notation $\t = t \frac{d}{dt}$ to distinguish it
from $D = q \frac{q}{dq}$.

In all three cases there is a unique analytic at $t=0$ solution satisfying
$a(t)=1+o(t)$. 

The equations $P_2(x_1,x_2)=\lambda$ and $P_3(x_1,x_2,x_3)=\lambda$ describe families of elliptic curves and $K3$-surfaces
of rank 19 respectively. It is therefore natural that the differential equations $\L_2$ and $\L_3$ have
modular parametrization, in which cases we can easily do analytic continuation of their solutions and compute the corresponding Mahler measures by formula~\eqref{ancont}. We do this in the next section. 

Unfortunately, this method is not applicable in the case $m(1+x_1+x_2+x_3+x_4)$. The equation $P_4(x_1,x_2,x_3,x_4)=\lambda$ describes a family of Calabi-Yau threefolds, and hence we do not expect $\L_4$ to have modular parametrization. Indeed, one can check that the differential
operator $\L_4$ is not a symmetric cube of any second order differential
operator and therefore does not admit a modular parametrization. Later
we show that one can still use the operator $\L_3$ to compute
$m(1+x_1+x_2+x_3+x_4)$, though the price paid is that we have to consider
non-homogeneous differential equations.

\section{Modular parametrizations of $\L_2$ and $\L_3$}\label{sec:mp}

Recall (\cite{Zag}) that for an arbitrary modular function $t(z)$ and a modular form $f(z)$
of weight $k$ on a congruence subgroup of ${\rm SL}(2,\Z)$ one can construct an ordinary differential operator of order $k+1$
with algebraic coefficients
\be{mde}
\sum_{i=0}^{k+1} c_i(t) \Bigl(\frac{d}{dt}\Bigr)^i \,, \quad c_i(t)
\in \overline{\Co(t)}
\ee
such that the functions 
\be{locsys}
f(z), z f(z), \dots , z^{k} f(z)
\ee  
span the kernel of the pull-back of this operator to the upper half-plane
\[
\sum_{i=0}^{k+1} c_i\bigl(t(z)\bigr)
\Bigl(\frac1{t'(z)}\frac{d}{dz}\Bigr)^i \= \sum_{i=0}^{k+1}  \widetilde c_i(z)
\Bigl(\frac{d}{dz}\Bigr)^i\,. 
\]
It follows that an operator with these properties is unique up to multiplication
by algebraic functions of $t$ on the left. On the other hand, the operator
\[
\frac1{t'(z) \cdot f(z)} \Bigl( \frac{d}{dz}\Bigr)^{k+1} \frac1{f(z)} 
\]
obviously annihilates the local system~\eqref{locsys} and it is a routine to
check that if we rewrite it as
\[
\frac1{t'(z) \cdot f(z)} \Bigl( \frac{d}{dz}\Bigr)^{k+1} \frac1{f(z)} \=
\sum_{i=0}^{k+1} g_i(z) \Bigl(\frac{d}{dt}\Bigr)^i 
\]
all coefficients $g_i(z)$ will be modular functions and hence can be written as
some algebraic functions $g_i(z)=c_i\bigl(t(z)\bigr)$. The reader could refer
to~\cite[Proposition 21]{Zag} for several constructions of the differential
equation satisfied by a modular form.

Recall that $D = \frac1{2\pi i}\frac{d}{dz} = q \frac{d}{dq}$. In
view of the above, we make a choice and define the operator
\be{Ltf}
\L_{t,f} \= \frac1{Dt \cdot f} D^{k+1} \frac1{f}\,. 
\ee   
This choice corresponds to the leading coefficient in~\eqref{mde} being 
\[
c_{k+1}(t) \= \frac{D^kt}{f^2}\,.
\]

It is not hard to check that both $\L_2$ and $\L_3$ can be obtained from
certain pairs of a modular function and modular form, namely the following ones
(see~\cite{Verrill} for all the details in the case of $\L_3$).
 
\begin{proposition} With 
\be{L2mpar}\bal
t &\= \frac{\eta(6z)^8 \eta(z)^4}{\eta(3z)^4 \eta(2z)^8} \= q - 4 q^2 + 10 q^3 +
\dots\\
f &\= \frac{\eta(2z)^6 \eta(3z)}{\eta(z)^3 \eta(6z)^2} \= 1 + 3 q + 3 q^2 + 3
q^3 + \dots\\
\eal\ee
one has
\[
\L_{t,f} \= \frac1{t} \L_2\Bigl( t, t \frac{d}{dt} \Bigr)\,.
\]
\end{proposition}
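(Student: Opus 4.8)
The plan is to compute $\L_{t,f}$ explicitly from its definition~\eqref{Ltf} and recognize the outcome as $\frac1t\L_2\bigl(t,t\frac{d}{dt}\bigr)$. Since $f$ has weight~$1$, the operator $\L_{t,f}=\frac{1}{Dt\cdot f}D^{2}\frac1f$ has order~$2$ in $\frac{d}{dt}$, and by construction its pull-back to the upper half-plane is annihilated by $f(z)$ and $zf(z)$: indeed $\frac1f\cdot f=1$ and $\frac1f\cdot zf=z$ both lie in the kernel of $D^{2}$. Using the chain rule $D=(Dt)\frac{d}{dt}$, and hence $D^{2}=(D^{2}t)\frac{d}{dt}+(Dt)^{2}\frac{d^{2}}{dt^{2}}$, one expands $\L_{t,f}$ with leading coefficient $Dt/f^{2}$; its three coefficients are combinations of the $\eta$-quotients $t,Dt,D^{2}t,Df,D^{2}f$ of total weight zero, i.e.\ modular functions on a genus-zero congruence group ($\Gamma_0(6)$) on which $t$ is a Hauptmodul. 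Each is therefore a rational function of $t$, determined by finitely many Fourier coefficients, and comparing them with the coefficients of $\frac1t\L_2$ computed below yields the claim. Conceptually this is the uniqueness recalled above: $\L_{t,f}$ is \emph{the} order-$2$ operator in $\frac{d}{dt}$ with solution space $\langle f,zf\rangle$ and leading coefficient $Dt/f^{2}$, and $\frac1t\L_2$ is another one of the same kind because the pencil of elliptic curves $P_2(x_1,x_2)=1/t$ from Section~\ref{sec:mmde} is modular-parametrized by $\bigl(t(z),f(z)\bigr)$.

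To compute the coefficients of $\frac1t\L_2$, note that $\t^{2}=t^{2}\frac{d^{2}}{dt^{2}}+t\frac{d}{dt}$ and $(\t+1)^{2}=t^{2}\frac{d^{2}}{dt^{2}}+3t\frac{d}{dt}+1$, so the coefficient of $\frac{d^{2}}{dt^{2}}$ in $\L_2$ is $t^{2}-10t^{3}+9t^{4}=t^{2}(1-t)(1-9t)$, hence in $\frac1t\L_2$ it is $t(1-t)(1-9t)$. Matching this with $Dt/f^{2}$ reduces the leading term to the $\eta$-quotient identity $Dt=t(1-t)(1-9t)f^{2}$, an equality of meromorphic modular forms of weight~$2$ on $\Gamma_0(6)$ that is verified by comparing $q$-expansions up to the order dictated by the valence formula; the other two coefficients reduce to analogous identities, which are in any case forced once the leading coefficients agree. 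As a consistency check, reverting $t(z)=q-4q^{2}+10q^{3}+\cdots$ to $q=t+4t^{2}+22t^{3}+\cdots$ and substituting into $f(z)=1+3q+3q^{2}+3q^{3}+\cdots$ reproduces $1+3t+15t^{2}+93t^{3}+\cdots$, which is the generating function $a(t)$ for the constant terms of the powers of $P_2$ from~\eqref{cterms}; this confirms that $f$, viewed through $t=t(z)$, is the solution of $\L_2$ analytic at $t=0$ with value~$1$.

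The only genuine work is the $\eta$-product bookkeeping: proving $Dt=t(1-t)(1-9t)f^{2}$ together with the two companion coefficient identities (equivalently, in the conceptual packaging, the modular parametrization of the elliptic pencil by $\bigl(t(z),f(z)\bigr)$). These are routine instances of the principle that a meromorphic modular form on a genus-zero congruence group with prescribed pole orders is determined by a bounded number of its Fourier coefficients, so no serious obstacle arises; the entirely analogous but longer verification for $\L_3$ is carried out in~\cite{Verrill}.
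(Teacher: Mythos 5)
The paper offers no proof of this proposition beyond the remark that it ``is not hard to check'' (citing Verrill for the analogous $\L_3$ case), and your verification is precisely the routine it has in mind: the coefficients of $\L_{t,f}$ are weight-zero modular functions on a genus-zero group, hence rational in the Hauptmodul $t$ and determined by finitely many Fourier coefficients, with the leading one pinned down by the identity $Dt=t(1-t)(1-9t)f^2$ --- an identity the paper itself uses implicitly in Lemma~\ref{rhs2_2} --- and the remaining coefficients forced by the uniqueness statement recalled just before the proposition. Your argument is correct and essentially the same as the paper's.
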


\begin{proposition}\label{L3mparPr} With $t(z)$ and $f(z)$ as in~\eqref{L3mpar}
one has
\[
\L_{t,f} \= \frac1{t} \L_3 \Bigl( t, t \frac{d}{dt} \Bigr)\,.
\]
\end{proposition}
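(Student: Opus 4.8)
The plan is to verify the identity $\L_{t,f} = \frac{1}{t}\L_3(t, t\frac{d}{dt})$ by a direct computation that reduces to checking finitely many $q$-expansion coefficients. First I would recall from the general discussion preceding the proposition that the operator $\L_{t,f} = \frac{1}{Dt\cdot f}D^{k+1}\frac{1}{f}$, when rewritten in terms of $\frac{d}{dt}$, has coefficients that are modular functions for the group $\Gamma_0(6)+3$, hence algebraic — in fact rational — functions of $t$. Since $\L_3$ is a third-order operator with polynomial coefficients in $t$ and $\t = t\frac{d}{dt}$, and since $k = 2$ here so $\L_{t,f}$ is also third order, the claim is an equality of two third-order operators with coefficients in $\Co(t)$. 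To prove it, I would expand $\frac{1}{t}\L_3(t, t\frac{d}{dt})$ in the basis $(\frac{d}{dt})^i$, $i=0,1,2,3$, collecting the four rational coefficients $\tilde c_i(t)$, and separately expand $\L_{t,f}$ the same way using $t(z)$ and $f(z)$ from~\eqref{L3mpar}.

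The key mechanical step is to pull back both operators to the upper half-plane via $t = t(z)$ and compare. Concretely, I would use $\frac{d}{dt} = \frac{1}{Dt}D$ (with $D = q\frac{d}{dq}$) to convert everything into the variable $q$, obtaining for each operator an expression $\sum_i \tilde\gamma_i(q) D^i$ with $\tilde\gamma_i(q)$ a power series in $q$ with algebraic (indeed rational) coefficients. Both operators annihilate the three-dimensional space spanned by $f(z), zf(z), z^2 f(z)$ — for $\L_{t,f}$ this is built into its definition, and for $\frac{1}{t}\L_3$ it is the statement that $f$, $zf$, $z^2 f$ provide the modular parametrization, which is exactly what we want. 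A clean way to finish: show that $\L_{t,f}$ and $\frac{1}{t}\L_3$ have the same leading coefficient. By construction the leading coefficient of $\L_{t,f}$ in the $\frac{d}{dt}$-expansion is $\frac{D^2 t}{f^2}$; one computes the leading coefficient of $\frac{1}{t}\L_3$ from its top term $\frac{1}{t}\cdot t^3(\t+1)^2(\t+2)^2\cdot(-225)$ — no, more carefully, from $\frac{1}{t}\t^3$ after expressing $\t^3$ in terms of $(\frac{d}{dt})^3$, which contributes $\frac{1}{t}(t\frac{d}{dt})^3 = \frac{1}{t}(t^3(\frac{d}{dt})^3 + \dots)$, giving leading term $t^2(\frac{d}{dt})^3$ up to the lower-order pieces and the contributions of the other terms of $\L_3$ to the $(\frac{d}{dt})^3$-coefficient. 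So I would check that $\frac{D^2 t}{f^2}$ equals, as a function of $z$, the rational function of $t(z)$ that appears as the leading coefficient of $\frac{1}{t}\L_3$; this is a single identity between $q$-expansions. Since two order-three operators with the same leading coefficient that both annihilate the same three-dimensional space of functions must coincide (their difference is an order $\le 2$ operator killing a three-dimensional space, hence zero), this completes the proof.

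The main obstacle is purely computational: one must reliably expand $D^3\frac{1}{f}$, divide by $Dt\cdot f$, and re-express the result as a polynomial in $\frac{d}{dt}$ with coefficients recognized as explicit rational functions of $t$ — equivalently, verify enough terms of the relevant $q$-expansions to pin down rational functions of bounded degree. Because all the $\eta$-quotients in~\eqref{L3mpar} are known $q$-series and the degrees of the polynomials in $\L_3$ are small, a finite (though tedious) check of $q$-expansion coefficients suffices; the number of coefficients needed is controlled by the degrees in $t$ appearing in $\L_3$. I expect no conceptual difficulty beyond organizing this bookkeeping, and I would in practice delegate the expansion to a computer algebra verification, noting as the text does that the case of $\L_3$ is worked out in detail in~\cite{Verrill}. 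The proof of the companion Proposition for $\L_2$ is entirely analogous with $k$ still equal to the relevant weight and the pair~\eqref{L2mpar} in place of~\eqref{L3mpar}.
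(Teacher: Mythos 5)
Your primary route --- expanding $\L_{t,f}=\frac1{Dt\cdot f}D^{3}\frac1f$ in the basis $\bigl(\frac{d}{dt}\bigr)^i$, recognizing the coefficients as rational functions of the Hauptmodul $t$ with bounded degree, and matching them against the explicit polynomial coefficients of $\frac1t\L_3$ by a finite $q$-expansion check --- is sound, and it is essentially all the paper does: the proposition is stated with ``it is not hard to check,'' deferring the computation to Verrill. Your sanity check on the leading coefficient is also consistent with the paper, which later uses $(Dt)^2/f^2=t^2(1-4t)(1-16t)$, exactly the $\bigl(\frac{d}{dt}\bigr)^3$-coefficient of $\frac1t\L_3$.

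The one genuine problem is your proposed ``clean way to finish.'' You want to conclude by observing that two order-three operators with the same leading coefficient which annihilate the same three-dimensional space must coincide. That inference is fine, but the premise that $\frac1t\L_3$ (pulled back via $t=t(z)$) annihilates $\mathrm{span}(f,zf,z^2f)$ is not available to you: as you yourself note, it \emph{is} the modular parametrization statement, i.e.\ the content of the proposition (given the uniqueness-up-to-left-multiplication remark). So the shortcut is circular as written. To repair it you would need an independent argument that the pullback of $\L_3$ kills that space --- for instance, first identifying $f(z)$ with the holomorphic solution $a(t)$ at the MUM point $t=0$ and then arguing that the full monodromy of $\L_3$ is realized by the deck transformations of $z\mapsto t(z)$ (e.g.\ via the symmetric-square structure of $\L_3$) --- which is not easier than the direct coefficient comparison. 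Drop the shortcut and carry out the full comparison (or, as the paper does, cite Verrill for it), and the proof is complete. The slip where you momentarily quote the top term of $\L_4$ instead of $\L_3$ is harmless since you correct it, but it should not survive into a final write-up.
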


Let us use these modular parametrizations to compute $m(P_2)$ and $m(P_3)$ by
formula~\eqref{ancont}. With~\eqref{L2mpar} we see that at $q=0$ we have $t=0$
and $f=1$, hence $f$ coincides with $a(t)$ near $t=0$. Since $t$ runs over the negative real axis when $z \in \frac12+i\R_{+}$ we have by~\eqref{ancont}
\be{mP2ev}\bal
m(P_2) \= - \re\; \bigl( t \frac{d}{dt}\bigr)^{-1} a(t) \Big|_{t=\infty} & \= -
\re\; \Bigl( \frac{t}{Dt} D\Bigr)^{-1} f \Big|_{z=\frac12} \\
& \= - \re \; D^{-1} \; \Bigl( \frac{Dt}{t} f \Bigr) \Big|_{z=\frac12}\,.
\eal\ee
Here and throughout the paper we make a particular choice for $D^{-1}$ by
letting it to act on $q$-series as
\[
D^{-1} \; \sum_{n=0}^{\infty} a_n q^n \= a_0 \log q \+ \sum_{n=1} \frac{a_n}{n}
q^n \,,
\]
which is in accordance with our choice for $\bigl( t \frac{d}{dt}\bigr)^{-1}$,
so that the above computation is correct. To evaluate the terminal expression in~\eqref{mP2ev} let us recall the definition of L-function of a modular form. 

For a modular form $g=\sum_{n=0}^{\infty} c_n q^n$ of weight $k$ on a congruence subgroup of ${\rm SL}(2,\Z)$ one
has $c_n = O(n^{k-1})$ and the $L$-function of $g$ is defined by
\[
L(g,s) \= \sum_{n=1}^{\infty} \frac{c_n}{n^s}
\]
when $\re \, s > k$. This function can be continued as a meromorphic function to the whole complex plane. Moreover, if $g$ is a cusp form then $L(g,s)$ is  holomorphic everywhere in $\Co$. 

\begin{proposition}\label{singL} Let $g$ be a modular form of weight $k$ with
$c_0=0$ and $p < k$ be an arbitrary integer. If $L(g,s)$ has no poles with $\re s \ge p$ then one has
\[
\underset{q \to 1}\lim \; {\Bigl(D^{-p} g \Bigr)}(q) \= L(g,p)\,.
\]
If $c_0 \ne 0$ the same holds when $p < 0$,
\[
\underset{q \to 1}\lim \; g(q) \= c_0 \+ L(g,0)
\]
and
\[
\underset{q \to 1}\lim \; {\Bigl(D^{-1} g \Bigr)}(q) \= \underset{q \to 1}\lim \;\Bigl(c_0 \log q + \sum_{n=1}^{\infty}\frac{c_n}{n} \; q^n \Bigr) \= L(g,1)
\]
assuming that the branch of $\log q$ is taken so that $\underset{q \to 1}\lim \log q = 0$.
\end{proposition}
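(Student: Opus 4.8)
The plan is to reduce everything to the classical integral representation of $L(g,s)$ via the Mellin transform and then compare with the power series defining $D^{-p}g$ at $q=1$. First I would recall that for a modular form $g=\sum_{n\ge 1}c_nq^n$ with $c_0=0$ one has, for $\re s$ large,
\[
\Gamma(s)L(g,s) \= \int_0^\infty g(it)\,t^{s-1}\,\frac{dt}{t}\cdot(2\pi)^s\,,
\]
i.e. $L(g,s)=\frac{(2\pi)^s}{\Gamma(s)}\int_0^\infty g(it)t^{s-1}dt$, the integral converging absolutely at $t\to\infty$ (exponential decay since $c_0=0$) and, by modularity, being handled at $t\to 0$ by the functional equation; this gives the meromorphic continuation, with possible poles only coming from the constant term at the various cusps. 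Under the hypothesis that $L(g,s)$ has no poles with $\re s\ge p$, the contour/Mellin machinery lets us evaluate $L(g,p)$ by a convergent expression.

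Next I would identify $D^{-p}g$ explicitly: by our normalization $D^{-p}g=\sum_{n\ge1}c_nn^{-p}q^n$ when $c_0=0$, so the claim $\lim_{q\to1}(D^{-p}g)(q)=L(g,p)$ is literally an Abel-summation statement: the Dirichlet series $\sum c_nn^{-p}$ need not converge, but its Abel mean $\lim_{q\to1^-}\sum c_nn^{-p}q^n$ exists and equals the analytic continuation $L(g,p)$. The key step is therefore a Tauberian/Abelian argument: write $q=e^{-2\pi t}$ with $t\to0^+$, so that $(D^{-p}g)(e^{-2\pi t})=\sum_{n\ge1}c_nn^{-p}e^{-2\pi nt}$, and recognize this sum as $\frac{1}{\Gamma(p)}\int_0^\infty g(i(t+u))u^{p-1}du$ for $p>0$ (expanding $g$ termwise and using $\int_0^\infty u^{p-1}e^{-2\pi nu}du=\Gamma(p)(2\pi n)^{-p}$). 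Letting $t\to0$ and using that $g(iu)$ decays exponentially at $\infty$ and is controlled at $0$ by modularity (so the integral $\int_0^\infty g(iu)u^{p-1}du$ converges precisely when $L(g,s)$ is regular for $\re s\ge p$), we get $\lim_{t\to0}(D^{-p}g)(e^{-2\pi t})=\frac{1}{\Gamma(p)}\int_0^\infty g(iu)u^{p-1}du=\frac{L(g,p)}{(2\pi)^p}$. A small bookkeeping point: the factor of $(2\pi)^p$ versus $\Gamma(p)$ must be reconciled with the stated formula — one should double-check the precise normalization in~\eqref{Ltf} and the paper's conventions for $D^{-1}$, since the clean identity $\lim_{q\to1}D^{-p}g=L(g,p)$ requires $L$ to be normalized without the $(2\pi)^s/\Gamma(s)$ factor, which is exactly the Dirichlet-series normalization used here.

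For the case $c_0\ne0$ and $p<0$, the function $g(q)$ itself is bounded as $q\to1$ along the real axis (the cusp at $q=1$, i.e.\ at a rational point, contributes only polynomial growth controlled by the weight, and evaluating the Eichler-type integral shows $g(1)=c_0+L(g,0)$ by the same Mellin argument split as $\int_0^1+\int_1^\infty$ with the functional equation converting the constant term $c_0$ into the $c_0$ summand and the remaining finite part into $L(g,0)$). The $p=-1$ statement $\lim_{q\to1}D^{-1}g=L(g,1)$ is the borderline case: here $D^{-1}g=c_0\log q+\sum_{n\ge1}\frac{c_n}{n}q^n$, and the $\log q$ term is inserted precisely to cancel the logarithmic divergence coming from the constant term $c_0$ at the cusp $q=1$; the finite remainder is then $L(g,1)$ by analytic continuation, and the hypothesis on the branch of $\log q$ ensures the cancellation is exact. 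The main obstacle I anticipate is the convergence analysis at $t\to0$ (equivalently $q\to1$): one must show that the hypothesis ``$L(g,s)$ has no poles with $\re s\ge p$'' is exactly what is needed for the boundary limit to exist and match, and this requires carefully tracking the constant terms of $g$ at \emph{all} cusps $\Gamma_0(N)$-equivalent to $q=1$, not just at $i\infty$, via the relevant slash operators; getting this bookkeeping right, and confirming that the stated clean formula holds without extraneous $(2\pi)^p$ or $\Gamma(p)$ factors under the paper's normalizations, is where the real care is required.
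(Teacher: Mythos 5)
Your setup is right---the statement is indeed an Abelian claim that the Abel mean $\lim_{q\to 1^-}\sum c_n n^{-p}q^n$ equals the analytic continuation $L(g,p)$, and the Mellin transform is the correct tool---but the step where you actually take the limit has a genuine gap. You write $(D^{-p}g)(e^{-2\pi t})$ as (up to the factor $(2\pi)^p/\Gamma(p)$, which you drop; that part is only a bookkeeping slip) $\int_0^\infty g(i(t+u))u^{p-1}\,du$ and then pass to $t=0$ inside the integral, asserting that $\int_0^\infty g(iu)u^{p-1}\,du$ ``converges precisely when $L(g,s)$ is regular for $\re\, s\ge p$''. That assertion is false: convergence of the forward Mellin integral at $u\to 0$ is governed by the growth of $g(iu)$ there, which is generically of order $u^{-k}$ by modularity, so the integral diverges at $u=0$ for every $p\le k$---exactly the range of the proposition---no matter where the poles of $L(g,s)$ lie. (Absence of poles of the analytic continuation in a half-plane does not enlarge the region of convergence of the integral.) Consequently the limit interchange is unjustified; moreover your representation only makes sense for $p>0$, so the case $p\le 0$ with $c_0=0$ is not covered at all.

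The paper closes this gap by going through the \emph{inverse} Mellin transform: for $\re\, w>k-p$ one has $\int_0^\infty t^{w-1}(D^{-p}g)(it)\,dt=\frac{\Gamma(w)}{(2\pi)^w}L(g,p+w)$, hence $(D^{-p}g)(it)=\frac1{2\pi i}\int_{c-i\infty}^{c+i\infty}\frac{\Gamma(w)}{(2\pi t)^w}L(g,p+w)\,dw$; shifting the contour from $\re\, w=c$ to $\re\, w=-\e$ (legitimate precisely because $L(g,s)$ has no poles with $\re\, s\ge p$ and $\Gamma$ decays in vertical strips) picks up the residue $L(g,p)$ of $\Gamma(w)$ at $w=0$ and leaves an error term $O(t^{\e})$ that vanishes as $t\to 0$. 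This is where the hypothesis on the poles is actually used---to permit the contour shift, not to make a divergent integral converge---and it handles all integers $p<k$ uniformly. Your treatment of the $c_0\ne 0$ cases is in the right spirit (subtract $c_0$, respectively $c_0\log q$, and reduce to the previous case), but it inherits the same unproved limit statement, and the worry about constant terms at other cusps is unnecessary once the argument is run through the $L$-function directly.
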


The reason we do not consider $p>1$ in the latter case is that it is not that clear how to define $D^{-p}$ when $c_0 \ne 0$. Note also that one always has $L(g,p)=0$ when $p < 0$. Indeed, the function $\Lambda(g,s)=\Gamma(s)/(2\pi)^s L(g,s)$ satisfies a functional equation when $s$ goes to $k-s$ and it is obviously holomorphic when $\re \, s > k$, hence also when $\re \, s < 0$. Since $\Gamma(s)$ has poles at nonpositive integers $L(g,s)$ has zeros at all integers $s<0$. If $g$ is a cusp form then also $L(g,0)=0$ by the same reason since $\Lambda(g,s)$ is holomorphic in the entire complex plane.

\begin{proof}[Proof of Proposition~\ref{singL}] First let $c_0 = 0$ or $c_0 \ne 0$ but $p<0$. When $\re \, w > k-p$ one has
\[\bal
\frac{\Gamma(w)}{(2\pi)^w} L(g,p+w) &\= \frac{\Gamma(w)}{(2\pi)^w}\sum_{n=1}^{\infty}\frac{c_n}{n^{p+w}} \= \sum_{n=1}^{\infty}\frac{c_n}{n^{p}} \int_0^{\infty} t^{w-1} \, e^{-2 \pi n t}\, dt\\
& \= \int_0^{\infty} t^{w-1} \Bigl(D^{-p} g \Bigr) (it) dt \,.
\eal\]
Since $\Gamma(w)$ is small when the imaginary part of $w$ is large and $L(g,p+w)$ is uniformly bounded we can apply the inverse Mellin transform. Namely, with any real $c>k-p$ one has  
\[\bal
\Bigl(D^{-p} g \Bigr) (it) &\= \frac1{2\pi i} \int_{c-i\infty}^{c+i\infty} \frac{\Gamma(w)}{(2\pi t)^w} L(g,p+w) dw \\
&\= L(g,p) \+ \frac1{2\pi i} \int_{-\varepsilon-i\infty}^{-\varepsilon+i\infty} \frac{\Gamma(w)}{(2\pi t)^w} L(g,p+w) dw 
\eal\]
where we moved the path of integration and $0<\varepsilon <1$ is chosen sufficiently small so that $\L(g,s)$ still has no poles with $\re \, s \ge p-\varepsilon$. The last integral is $O(t^{\varepsilon})$ and obviously vanishes when $t\to 0$.

If $c_0 \ne 0$ the calculation above would remain correct if we substitute $D^{-p}(q)$ by $g(q)-c_0$ and $D^{-1}(q)-c_0 \log q$ when $p=0$ and $p=1$ correspondingly.
\end{proof}

Going back to~\eqref{mP2ev}, consider a modular form of weight~3 given by
\be{thm1g1}\bal
g(z) & \= \Bigl( \frac{Dt}{t} f \Bigr)(z+\frac12) \= 1+q-5q^2+q^3+11
q^4-24q^5+\dots \\
& \= E_{3,\chi_{-3}}(z) - 2 E_{3,\chi_{-3}}(2z) - 8 E_{3,\chi_{-3}}(4z) \\
\eal\ee 
where $E_{3,\chi_{-3}} \in M_3(\Gamma_0(3),\chi_{-3})$ is the Eisenstein series
\be{Eis3}
E_{3,\chi_{-3}} \= -\frac19 + \sum_{n \ge 1} \sum_{d|n} \chi_{-3}(d) d^2 q^n \,.
\ee 
Then $L(E_{3,\chi_{-3}},s) \= \zeta(s) \, L(\chi_{-3},s-2)$ and
\[
L(g,s) \= \Bigl( 1 - \frac2{2^s} - \frac{8}{4^s} \Bigr) \zeta(s) \,
L(\chi_{-3},s-2)
\]
is holomorphic in the entire complex plane. Since Fourier coefficients of the form $g$ are real $L(g,s)$ takes real values
at real arguments $s$. Combining~\eqref{mP2ev} with Proposition~\ref{singL} we
get
\[
m(P_2) \= - L(g,1) \= 2 \, \underset{s \to 1}\lim \, \zeta(s) \,
L(\chi_{-3},s-2) \= \frac{3\sqrt{3}}{2 \pi}L(\chi_{-3},2)\,.
\]
Recall that $m(1+x_1+x_2)= \frac12m(P_2)$, hence we have just
reproved~\eqref{2var}.

Analogously, with~\eqref{L3mpar} we have that $t(z)$ assumes all negative real values along the imaginary half-axis and $t=\infty$ at $z=0$, hence
\[
m(P_3) \= - \re \; D^{-1}\, \Bigl( \frac{Dt}{t} f \Bigr) \Big|_{q=1} \,. 
\]
We consider
\be{thm2g1}\bal
g \=  &\frac{Dt}{t} f \= 1+2q-14q^2+38q^3-142q^4+252q^5-266q^6+\dots \\
&\= 2 E_4(z) - 32 E_4(2z) - 18 E_4(3z) + 288 E_4(6z)  
\eal\ee
with the Eisenstein series
\be{Eis4}
E_4 \= \frac1{240} \+ \sum_{n \ge 1} \sum_{d|n} d^3 q^n \,.
\ee
The function $L(E_4,s)=\zeta(s)\zeta(s-3)$ has the only pole at $s=4$, and one can easily see that
\[
L(g,s) \= \Bigl( 2 - \frac{32}{2^s} - \frac{18}{3^s} + \frac{288}{6^s}
\Bigr) \, \zeta(s)\zeta(s-3)
\]
is holomorphic in the entire complex plane because the factor in the brackets vanishes at $s=4$. Finally, 
\[
m(P_3) \= - L(g,1) \= - \Bigl( 2 - \frac{32}{2} - \frac{18}{2} + \frac{288}{6}
\Bigr) \underset{s \to 1}\lim \, \zeta(s)\zeta(s-3) \= \frac{7
\zeta(3)}{\pi^2}\,.
\]
This again reproves~\eqref{3var} because $m(1+x_1+x_2+x_3)= \frac12m(P_3)$.
 
\section{Computation of $m(P_n)$ via $\L_{n-1}$}\label{sec:Ln1}

Observe that in general the Mahler measure of a Laurent polynomial $P$ which takes nonnegative real
values on the torus $\{|x_1|=\dots=|x_n|=1\}$ can be written
as 
\[
m(P) \= \int_{\l_{\min}}^{\l_{\max}} \log(\l) \, a^*(\l) d\l
\]
where
\[\bal
\l_{\min} &\= \underset{|x_1| = \dots = |x_n|= 1}\min \,
P(x_1,\dots,x_n)\,,\\
\l_{\max} &\= \underset{|x_1| = \dots = |x_n|= 1}\max 
P(x_1,\dots,x_n)\,,
\eal\]
and $a^*(\l)$ is equal to the integral over the variety
\[
\{ P(x_1,\dots,x_n) \= \l \} \cap \{ |x_1| = \dots = |x_n|= 1 \} 
\]
of the the $(n-1)$-form $\omega_{\l}$ defined as the residue 
\[
\frac1{(2\pi i)^n} \frac{dx_1}{x_1} \wedge \dots \wedge \frac{dx_n}{x_n} \=
\omega_\l  \wedge d\l\,,
\]
i.e.
\[
a^*(\l) \= \int_{\{ P=\l\} \cap \{|x_i|=1\}} \omega_{\l} \,.
\]

Along the same lines as we did in Section~\ref{sec:mmde}, one can recover $m(P)$
from the generating function of the ``moments'' of $a^*(\l)$
\[
a_m \= \int_{\l_{\min}}^{\l_{\max}} \l^m \; a^*(\l) d\l \,,\qquad a(t) \=
\sum_{m=0}^{\infty} a_m \, t^m 
\]
by formula~\eqref{ancont}. Indeed, the moment $a_m$ is exactly the constant term
of $P^m$, so this $a(t)$ is identical with the one in the previous section. Also
one can see it directly by repeating the old trick in our new notation: for $-\infty < t < 0$
\[\bal
&\int_{\l_{\min}}^{\l_{\max}} \log(\l - \frac 1 t ) \; a^*(\l) d\l  \\
&\= - \log (-t) - \sum_{m=1}^{\infty} \frac{t^m}{m} \int_{\l_{\min}}^{\l_{\max}}
\l^m \; a^*(\l) d\l  \\
&\= - \log(- t) - \sum_{m=1}^{\infty} \frac{t^m}{m} a_m \= - \bigl( t
\frac{d}{dt}\bigr)^{-1} a(t)\,,
\eal\]
hence 
\[
m(P) \= \int_{\l_{\min}}^{\l_{\max}} \log(\l) a^*(\l) d\l \= - \re \; \bigl(
t \frac{d}{dt}\bigr)^{-1} a(t) \Big|_{t = \infty}
\]
where we again assume that the analytic continuation of $a(t)$ is done along the real negative halfaxis. 
In Section~\ref{sec:mmde} we mentioned without a proof that $a(t)$ satisfies a
differential equation~\eqref{de}. Now this fact follows from the proposition
below because $a^*(\l)$ is a period for the 1-parametric family of varieties
$P(x_1,\dots,x_n)=\l$ and it satisfies a certain differential equation 
\[
\widetilde{\L} \bigl(\l, \l\frac{d}{d\l} \bigr) a^*(\l) \= 0\,,
\]
namely the Picard-Fuchs differential equation for this family.
Proposition~\ref{de_transform} states that moments of a solution of a
differential equation satisfy another differential equation determined by the
initial one, though in general they are solutions of this equation with a
right-hand side. This right-hand side is a simple rational function which depends on the path of
integration and on the choice of the solution along this path. In the above
situation with $a^*(\l)$ along the real line from $\l_{\min}$ to $\l_{\max}$
the right-hand side exceptionally appears to vanish. But later we will need a
general case as well.    

For a Laurent series $\sum_n a_n t^n$ we introduce the notations
\[\bal
& \Bigl[ \sum_n a_n t^n \Bigr]_{+} \= \sum_{n \ge 0} a_n t^n \\
& \Bigl[ \sum_n a_n t^n \Bigr]_{-} \= \sum_{n < 0} a_n t^n \\
\eal\]
for its parts with nonnegative and negative powers correspondingly. One then has
\[
\sum_n a_n t^n \= \Bigl[ \sum_n a_n t^n \Bigr]_{-} \+ \Bigl[ \sum_n a_n t^n
\Bigr]_{+}\,.  
\]

\begin{proposition}\label{de_transform}
For a polynomial differential operator
\[
\widetilde{\L}(\l,\t) \= \sum_{i=0}^{M} \sum_{j=0}^{N} c_{ij} \l^i \t^j \,,\quad
\t \= \l \frac{d}{d\l}
\]
consider a solution $F(\l)$ of $\widetilde{\L} F = 0$ along some path between
$\l=\alpha$ and $\l=\beta$. Then for the generating function of its moments
\[
b(t) \= \sum_{n=0}^{\infty} t^n \int_{\alpha}^{\beta} \l^n F(\l) d\l 
\]  
one has
\[
\widetilde{\L}\Bigl(\frac1t,-\theta_t-1\Bigr) b(t)  \= h(t) 
\]
where the right-hand $h(t)$ is a rational function which can have poles at most
at $t=0,\frac1{\alpha},\frac1{\beta}$ and is defined as follows. Let
\[
\widetilde{\L}^{(k)}(\l,\t) \= \sum_{i=0}^M\sum_{j=k}^N c_{ij} \l^i \t^{j-k}
\]
and for given $\l$ consider a rational function of $t$
\[
H_{\l}(t) \= \l \, \sum_{j=0}^{N-1} \bigl(\t^j F \bigr)(\l) \, \Bigl[
\widetilde{\L}^{(j+1)}\Bigl(\frac1t,-\theta_t-1\Bigr) \frac1{1-\l t } \Bigr]_{+}
\,.
\]
Then
\[
h(t) \= \Bigl[ \widetilde{\L}\Bigl(\frac1t,-\t_t-1\Bigr) b(t) \Bigr]_{-} \-
H_{\beta}(t) \+ H_{\alpha}(t) \,. 
\]
\end{proposition}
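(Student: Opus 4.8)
The plan is to work with the moment coefficients $M_n \= \int_{\alpha}^{\beta}\l^{n}F(\l)\,d\l$, so that $b(t)\=\sum_{n\ge 0}M_n t^{n}$ (equivalently $b(t)\=\int_{\alpha}^{\beta}\frac{F(\l)}{1-\l t}\,d\l$ for small~$t$), and then to compare, coefficient by coefficient in~$t$, two expressions for $\widetilde{\L}\bigl(\frac1t,-\theta_t-1\bigr)b(t)$. The left-hand side is immediate: since $\theta_t$ multiplies $t^{m}$ by~$m$, the operator $-\theta_t-1$ multiplies it by $-(m+1)$, so for every integer~$n$ (with the convention $M_k=0$ for $k<0$)
\[
\bigl[t^{n}\bigr]\,\widetilde{\L}\bigl(\tfrac1t,-\theta_t-1\bigr)b(t) \= \sum_{i,j}c_{ij}\,\bigl(-(n+i+1)\bigr)^{j}M_{n+i}\,.
\]
In particular this Laurent series has only the finitely many negative powers $t^{-M},\dots,t^{-1}$, and their totality is exactly the term $\bigl[\widetilde{\L}(\tfrac1t,-\theta_t-1)b(t)\bigr]_{-}$ that already appears in the claimed formula for $h(t)$; so it remains to pin down the nonnegative part.

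Next I would derive a recursion for the moments by integrating the identity $\widetilde{\L}(\l,\t)F=0$ against $\l^{n}\,d\l$ along the path and integrating by parts. Writing $M^{(k)}_{n}\=\int_{\alpha}^{\beta}\l^{n}(\t^{k}F)\,d\l$ and using $\t^{k+1}F=\l\,(\t^{k}F)'$, one step of integration by parts gives $M^{(k+1)}_{n}=\bigl[\l^{n+1}\t^{k}F\bigr]_{\alpha}^{\beta}-(n+1)M^{(k)}_{n}$, and iterating,
\[
M^{(j)}_{n} \= \bigl(-(n+1)\bigr)^{j}M_{n}\+\sum_{k=0}^{j-1}\bigl(-(n+1)\bigr)^{j-1-k}\bigl[\l^{n+1}\t^{k}F\bigr]_{\alpha}^{\beta}\,.
\]
Substituting into $0=\sum_{i,j}c_{ij}M^{(j)}_{n+i}$ and moving the $M$-terms to one side gives, for $n\ge 0$,
\[
\sum_{i,j}c_{ij}\bigl(-(n+i+1)\bigr)^{j}M_{n+i}\=-\sum_{i,j}c_{ij}\sum_{k=0}^{j-1}\bigl(-(n+i+1)\bigr)^{j-1-k}\bigl[\l^{n+i+1}\t^{k}F\bigr]_{\alpha}^{\beta}\,,
\]
and the left side is precisely the coefficient of $t^{n}$ found in the first step.

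It then remains to recognize the right-hand side, with $[\,\cdot\,]_{\alpha}^{\beta}$ split into its $\beta$- and $\alpha$-parts, as $\bigl[t^{n}\bigr]\bigl(H_{\alpha}(t)-H_{\beta}(t)\bigr)$. This is pure bookkeeping once one unwinds the definitions: the inner sum over $k=0,\dots,j-1$ corresponds to summing the operators $\widetilde{\L}^{(k+1)}$ (that is, $\widetilde{\L}$ with the $k+1$ factors that were integrated out removed); the substitution $\l\mapsto\tfrac1t$, $\t\mapsto-\theta_t-1$ applied to $\frac1{1-\l t}=\sum_{m\ge 0}\l^{m}t^{m}$ reproduces exactly the scalars $\bigl(-(m+1)\bigr)^{j-1-k}$; and the shift $m\mapsto m-i$ together with the truncation $[\,\cdot\,]_{+}$ discards precisely the spurious negative powers of~$t$ introduced by the factor $t^{-i}$. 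Carrying this through shows the $\beta$-part of the right side equals $-[t^{n}]H_{\beta}(t)$ and the $\alpha$-part equals $+[t^{n}]H_{\alpha}(t)$ for all $n\ge 0$. Since $H_{\alpha}$ and $H_{\beta}$ are assembled from $[\,\cdot\,]_{+}$-parts, they contain no negative powers of~$t$, so the two Laurent series agree in every nonnegative degree; hence $\bigl[\widetilde{\L}(\tfrac1t,-\theta_t-1)b(t)\bigr]_{+}=H_{\alpha}(t)-H_{\beta}(t)$, and restoring the negative part yields the asserted formula for $h(t)$. Rationality and the pole locations follow at once: $[\,\cdot\,]_{-}$ is a polynomial in $t^{-1}$, contributing a pole only at $t=0$, while $[\,\cdot\,]_{+}$ applied to the rational function $\widetilde{\L}^{(j+1)}(\tfrac1t,-\theta_t-1)\frac1{1-\beta t}$ kills the pole at $t=0$ and leaves only a pole at $t=\tfrac1\beta$, and symmetrically for~$\alpha$.

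The whole argument is elementary calculus, and the step I expect to be the main obstacle is the final bookkeeping: keeping the signs, the index shift $m\mapsto m-i$, and the ranges of the nested sums perfectly aligned when matching the boundary terms coming from the iterated integration by parts against the coefficients extracted from $H_{\alpha}(t)$ and $H_{\beta}(t)$. The conceptual reason it works is that for this pairing $-\theta_t-1$ is ``dual'' to $\t=\l\frac{d}{d\l}$, in that $(-\theta_t-1)^{j}t^{m}=\bigl(-(m+1)\bigr)^{j}t^{m}$ exactly mirrors the arithmetic produced by integrating $\l^{m}(\t^{j}F)$ by parts $j$ times.
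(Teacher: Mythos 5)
Your proof is correct and follows essentially the same route as the paper: both rest on the iterated integration-by-parts identity for $\int_\alpha^\beta \l^{n+i}(\t^j F)\,d\l$ applied to $0=\int_\alpha^\beta \l^n(\widetilde\L F)\,d\l$, with the boundary terms assembling into $H_\alpha-H_\beta$ and the bulk terms into $\bigl[\widetilde\L(\tfrac1t,-\theta_t-1)b\bigr]_+$. The only difference is presentational — you extract coefficients of $t^n$ while the paper manipulates the generating function directly — and your bookkeeping of signs, index shifts, and the role of $[\,\cdot\,]_+$ all checks out.
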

\begin{proof}
Integration by parts yields
\[
\int_{\alpha}^{\beta} \l^i \bigl(\theta^j F \bigr)(\l) d\l \= \sum_{s=0}^{j-1}
\l^{i+1} (-i-1)^s \theta^{j-1-s} F(\l) \Big|_{\l=\alpha}^{\l=\beta} 
\+ (-i-1)^j \int_{\alpha}^{\beta} \l^i F(\l) d\l \,,
\]
and we apply this formula to every term below to get  
\[\bal
0 & \= \sum_{n=0}^{\infty} t^n \int_{\alpha}^{\beta} \l^n \Bigl(
\widetilde{\L}(\l,\theta_{\l})F \Bigr)(\l) d\l \= \sum_{ij} c_{ij}
\sum_{n=0}^{\infty} t^n \int_{\alpha}^{\beta}  \l^{n+i} \bigl(\theta^j F
\bigr)(\l) d\l \\
& \= \sum_{n=0}^{\infty} t^n \sum_{i,j} c_{ij} \Bigl[ \sum_{s=0}^{j-1}
\l^{n+i+1} (-n-i-1)^s \theta^{j-1-s} F(\l) \Big|_{\l=\alpha}^{\l=\beta} \+ 
(-n-i-1)^j \int_{\alpha}^{\beta} \l^{n+i} F(\l) d\l \Bigr] \\
& \= \sum_{k=0}^{N-1} \theta^k F(\l) \sum_{j \ge k+1} c_{ij}
\frac{\l}{t^i}\sum_{n=0}^{\infty} \l^{n+i} t^{n+i} (-n-i-1)^{j-1-k}
\Big|_{\l=\alpha}^{\l=\beta} \\
& \quad \+  \sum_{i,j} c_{ij} \frac1{t^i} \sum_{n=0}^{\infty} (-n-i-1)^j t^{n+i}
\int_{\alpha}^{\beta} \l^{n+i} F(\l) d\l \\
& \= \sum_{k=0}^{N-1} \theta^k F(\l) \Bigl[
\widetilde{\L}^{(k+1)}\Bigl(\frac1{t},-\theta_t-1\Bigr)\frac1{1-\l t} \Bigr]_{+}
\Big|_{\l=\alpha}^{\l=\beta} \+  \Bigl[
\widetilde{\L}\Bigl(\frac1t,-\theta_t-1\Bigr) b(t) \Bigr]_{+} \\
& \= H_{\beta}(t) \- H_{\alpha}(t) \+ \Bigl[
\widetilde{\L}\Bigl(\frac1t,-\theta_t-1\Bigr) b(t) \Bigr]_{+} \,.
\eal\]
\end{proof}

Now we introduce another idea which will allow us to apply the above
proposition to the case of linear polynomials. We consider $P=P_n$ as defined
in~\eqref{Pn}, and let $\widetilde{\L}_n$ be the corresponding Picard-Fuchs
differential operator. It can be easily recovered from $\L_n$ since (up to a
simple multiplier) the operators $\L_n(t, \t_t)$ and $\widetilde{\L}_n\Bigl(\frac1t,-\theta_t-1\Bigr)$ must be equal. For example, with
\[\bal
\widetilde{\L}_2 &\= 9 \t^2 - \l (10 \t^2 + 10 \t + 3) + \l^2(\t+1)^2 \\
\widetilde{\L}_3 &\= 64 \theta^3 - 2\l (2 \t+1)(5 \t^2 + 5 \t + 2) + \l^2
(\t+1)^3 \\
\eal\]     
one can easily check that
\be{L21t}
\widetilde{\L}_2\Bigl(\frac1t,-\theta-1\Bigr) \= \frac1{9 t^2}
\widetilde{\L}_2(9 t, \theta) \= \frac1{t^2} \L_2(t,\theta)
\ee
and
\be{L31t}
\widetilde{\L}_3\Bigl(\frac1t,-\theta-1\Bigr) \= -\frac1{64 t^2}
\widetilde{\L}_3(64 t, \theta) \= -\frac1{t^2} \L_3(t,\theta)\,.
\ee
Let $a^*(\l)$ be the solution of $\widetilde{\L}_n a^* = 0$ such that
\be{mPn}
m(P_n) \= \int_0^{(n+1)^2} \log(\l) a^*(\l) d\l\,.
\ee
Applying Jensen's formula~\eqref{Jensen} in the variable $x_{n+1}$ gives
\[\bal
&\frac12 m(P_{n+1}) \= \frac1{(2 \pi i)^{n+1}} \int_{|x_i|=1}
\log|1+x_1+\dots+x_{n+1}| \; \frac{dx_1}{x_1} \dots \frac{dx_{n+1}}{x_{n+1}} \\
&\= \frac1{(2 \pi i)^{n}} \int_{|x_i|=1, |1+x_1+\dots+x_n|>1}
\log|1+x_1+\dots+x_n| \; \frac{dx_1}{x_1} \dots \frac{dx_n}{x_n}
\eal\]
or
\be{mPn1}
m(P_{n+1}) \= \int_1^{(n+1)^2} \log(\l) a^*(\l) d\l\,.
\ee
Observe that~\eqref{mPn} and~\eqref{mPn1} differ only by the lower limit of
integration. This approach allows us to state that for every $n$ there is a
simple rational function $h_n(t)$ and an analytic solution $b_n(t)$ of 
\[
\L_n\Bigl(t, t \frac{d}{dt}\Bigr) b_n(t) \= h_n(t)
\]
such that 
\[
m(P_{n+1}) \= - \re \; \bigl( t \frac{d}{dt}\bigr)^{-1} b_n(t) \Big|_{t =
\infty}\,.
\]
Below we give exact statements for $n=2$ and $n=3$. We will formulate our results
using solutions with rational coefficients rather then their
transcendental linear combinations. This makes sense from the number-theoretical
point of view and will be used later.    

\bigskip

\begin{theorem}\label{Thm1} Take $\L_2(t, \t) \= \t^2 \- t (10 \t^2 + 10 \t + 3)
\+ 9 t^2 (\t+1)^2$ and consider the following analytic at $t=0$ solutions of 
\[\bal
& \L_{2}\bigl(t, t \frac{d}{dt}\bigr)\, \phi(t) \= 0 \qquad \phi(t)\=1+3t+\dots
\\
& \L_{2}\bigl(t, t \frac{d}{dt}\bigr)\, \psi(t) \= \frac{t}{1-t} \qquad \psi(t)
\= t + \dots
\eal\]
Then
\[
m(P_3) \= - \re \, \bigl( t \frac{d}{dt}\bigr)^{-1} [\frac34 \phi(t) +
\frac{6}{\pi^2} \psi(t)] \Big|_{t = \infty} \,.
\]
\end{theorem}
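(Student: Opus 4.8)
The plan is to combine the reduction of Section~\ref{sec:Ln1} with Proposition~\ref{de_transform}, applied to the Picard--Fuchs operator $\widetilde\L_2$ and the period $a^*$. By~\eqref{mPn1} with $n=2$ one has $m(P_3)=\int_1^9\log(\l)\,a^*(\l)\,d\l$, where $a^*$ solves $\widetilde\L_2 a^*=0$ and equals the push-forward density of $P_2=|1+x_1+x_2|^2$ under the uniform measure on the torus; it is analytic at $\l=0$ and normalised so that $\int_0^9 a^*=a_0=1$. With $b(t)=\sum_{m\ge0}t^m\int_1^9\l^m a^*(\l)\,d\l$ (convergent for $|t|<\tfrac19$), the argument of Section~\ref{sec:Ln1} gives $m(P_3)=-\re\,(t\tfrac{d}{dt})^{-1}b(t)\big|_{t=\infty}$, the continuation being along $t<0$. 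Since $t=0$ is a regular singular point of $\L_2$ with indicial root $0$ of multiplicity two, the analytic-at-$0$ solutions of $\L_2 y=0$ are spanned by $\phi$, so $\tfrac34\phi+\tfrac6{\pi^2}\psi$ is the unique analytic solution of $\L_2(t,t\tfrac{d}{dt})\,y=\tfrac6{\pi^2}\,\tfrac t{1-t}$ with $y(0)=\tfrac34$. Thus the theorem reduces to the two claims
\[
(a)\quad b(0)=\tfrac34\,,\qquad\qquad (b)\quad \L_2\bigl(t,t\tfrac{d}{dt}\bigr)\,b(t)=\tfrac6{\pi^2}\,\frac{t}{1-t}\,.
\]

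For (b) I would apply Proposition~\ref{de_transform} to $\widetilde\L_2$, the path $\alpha=1\to\beta=9$, and $F=a^*$: it gives $\widetilde\L_2(\tfrac1t,-\t_t-1)\,b(t)=h(t)$ with $h$ rational, poles at most at $t=0,1,\tfrac19$, so by~\eqref{L21t} $\L_2(t,t\tfrac{d}{dt})\,b(t)=t^2h(t)$. Writing $\widetilde\L_2=(\l-1)(\l-9)\t^2+2\l(\l-5)\t+\l(\l-3)$ one has $\widetilde\L_2^{(1)}=(\l-1)(\l-9)\t+2\l(\l-5)$ and $\widetilde\L_2^{(2)}=(\l-1)(\l-9)$, and a short calculation gives $[\widetilde\L_2^{(1)}(\tfrac1t,-\t_t-1)\tfrac1{1-\l t}]_+=-(\l-1)(\l-9)(1-\l t)^{-2}$, $[\widetilde\L_2^{(2)}(\tfrac1t,-\t_t-1)\tfrac1{1-\l t}]_+=(\l-1)(\l-9)(1-\l t)^{-1}$, and $[\widetilde\L_2(\tfrac1t,-\t_t-1)b(t)]_-=(b_1-3b_0)t^{-1}$ with $b_j=\int_1^9\l^j a^*$. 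Now $a^*$ is analytic at the end-point $\l=9$, since $|1+x_1+x_2|^2$ has a non-degenerate maximum $9$ at $x_1=x_2=1$, so its push-forward density is smooth there; hence $H_\beta=0$ and $h$ has no pole at $t=\tfrac19$. At $\l=1$ the period is logarithmically singular, $a^*(\l)=A\log(\l-1)+O(1)$, so $\t a^*$ has a simple pole of residue $A$ there while $a^*(1)=\infty$; one must read $H_\alpha(t)$ as $\lim_{\l\to1}$ of its defining expression, which is finite precisely because the two brackets above vanish to first order at $\l=1$ (a singular point of $\widetilde\L_2$): the $a^*(\l)$-term tends to $0$ as $(\l-1)\log(\l-1)\to0$, and the $\t a^*(\l)$-term tends to $A\cdot(\l-9)(1-\l t)^{-1}\big|_{\l=1}=-8A(1-t)^{-1}$. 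Hence $h(t)=(b_1-3b_0)t^{-1}-8A(1-t)^{-1}$ and
\[
\L_2\bigl(t,t\tfrac{d}{dt}\bigr)\,b(t)=t^2h(t)=(b_1-3b_0)\,t-\frac{8A\,t^2}{1-t}\,.
\]

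It remains to evaluate the three constants. First $b(0)=b_0=\int_1^9 a^*=a_0-\int_0^1 a^*=1-\mathrm{meas}\{|1+x_1+x_2|\le1\}$, and this measure equals $\tfrac14$ by a direct computation (conditioning on $x_1=e^{i\theta_1}$, the conditional probability is $|\theta_1|/2\pi$, whose average over $\theta_1$ uniform on $[-\pi,\pi]$ is $\tfrac14$); so $b_0=\tfrac34$, which is~(a). Next the logarithmic coefficient $A$, which is where $\pi$ enters: from the explicit description of $a^*$ — with $\rho=|1+x_1|$, of density $\tfrac2{\pi\sqrt{4-\rho^2}}$ — one has $a^*(\l)=\tfrac2{\pi^2}\int\frac{d\rho}{\sqrt{(4-\rho^2)(4\rho^2-(\l-\rho^2-1)^2)}}$ over the admissible $\rho$-range, and the logarithmic divergence of this integral as $\l\to1$, fed by the two ends $\rho\to0$ and $\rho\to2$ with coefficients $-\tfrac14$ and $-\tfrac18$ of $\log|\l-1|$, gives $A=-\tfrac3{4\pi^2}$, so $-8A=\tfrac6{\pi^2}$. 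Finally $b_1-3b_0=\int_1^9(\l-3)a^*=(a_1-3a_0)-\int_0^1(\l-3)a^*=-\int_0^1(\l-3)a^*=\tfrac6{\pi^2}=-8A$ (this last identity can also be obtained by integrating $\widetilde\L_2 a^*=0$ against a suitable weight, treating the logarithmic end-point as a limit). Substituting into the displayed formula, $\L_2(t,t\tfrac{d}{dt})\,b(t)=\tfrac6{\pi^2}\bigl(t+\tfrac{t^2}{1-t}\bigr)=\tfrac6{\pi^2}\,\tfrac t{1-t}$, which is~(b).

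The main obstacle is the evaluation of $A$ (equivalently of the moment $b_1$): it produces the transcendental factor $\tfrac6{\pi^2}$ and requires either a delicate asymptotic analysis of the density $a^*$ near $\l=1$ — where the logarithm is fed by both the $\rho\to0$ and $\rho\to2$ ends, a combination easy to miscount — or an explicit connection-coefficient computation for $\widetilde\L_2$ between $\l=0$ and $\l=1$. The remaining ingredients (Proposition~\ref{de_transform}, the vanishing of $H_\beta$ at the Morse maximum $\l=9$, and the elementary measure $\tfrac14$) are routine, and the bookkeeping of $\L_2(t,t\tfrac{d}{dt})\,b(t)$ via the $\left[\,\cdot\,\right]_{\pm}$-notation of Proposition~\ref{de_transform} should be the main line of the written proof.
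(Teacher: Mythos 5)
Your proof is correct, and its skeleton coincides with the paper's: both reduce the theorem to the two facts $b_0=\tfrac34$ and $\L_2\bigl(t,t\tfrac{d}{dt}\bigr)b=\tfrac{6}{\pi^2}\tfrac{t}{1-t}$, and both obtain the inhomogeneous term from Proposition~\ref{de_transform} together with \eqref{L21t}. Where you genuinely diverge is in the evaluation of the constants. The paper works with $c(t)=\int_0^1\frac{a^*(\lambda)}{1-t\lambda}\,d\lambda$, so that only the singular points $\lambda=0,1$ enter, and proves Lemmas~\ref{asymp_2}--\ref{rhs2_2} entirely through the modular parametrization~\eqref{L2mpar}: the identification of $a^*$ in the Frobenius bases comes from the asymptotics of $a(t)$ as $t\to-\infty$ read off along $z\in\tfrac12+i\R_+$, and $c_0=\tfrac14$, $c_1-3c_0=-\tfrac6{\pi^2}$ are evaluated as explicit integrals of modular forms. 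You instead apply Proposition~\ref{de_transform} directly on $[1,9]$ and extract everything from the elementary description of $a^*$ as the push-forward density of $|1+x_1+x_2|^2$: the conditioning argument giving $b_0=\tfrac34$ (correct), the two-endpoint asymptotics of the explicit $\rho$-integral giving the logarithmic coefficient $A=-\tfrac3{4\pi^2}$ at the saddle value $\lambda=1$ (your coefficients $-\tfrac14$ and $-\tfrac18$ check out), and the identity $b_1-3b_0=-8A$. Your route buys self-containedness --- no modularity is needed for this theorem --- while the paper's buys uniformity: the same modular machinery is what carries over to Theorem~\ref{Thm2}, where no elementary density formula is available and the analogous constants involve $\Omega_{15}$.

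Two places need to be written out in full. First, ``$a^*$ is analytic at $\lambda=9$'' is slightly delicate ($\lambda=9$ is a regular singular point of $\widetilde{\L}_2$ with doubled exponent $0$); but for $H_9=0$ you only need $(\lambda-9)\,\t^j a^*(\lambda)\to 0$ for $j=0,1$, which holds since $a^*$ has at worst a logarithmic singularity there. Second, and more importantly, the identity $b_1-3b_0=-8A$ is the actual content of the computation and is not forced by analyticity of $b$ at $t=0$ (any right-hand side vanishing at $t=0$ admits analytic solutions), so it cannot be left as an assertion. Your parenthetical hint does work: integrating $\lambda^{-1}\,\widetilde{\L}_2a^*=0$ over $(1,9)$ and integrating by parts twice, the divergent boundary contributions proportional to $a^*(1^+)$ cancel between the two terms, the contributions at $\lambda=9$ cancel as well, and the surviving boundary term $\lim_{\lambda\to 1^+}(\lambda-1)(\lambda-9)\lambda\,(a^*)'(\lambda)=-8A$ yields exactly $b_1-3b_0=-8A$. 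This calculation (or the paper's direct evaluation of the moment) must appear explicitly for the proof to be complete.
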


\bigskip

Though all Mahler measures in this theorem were already computed before, we will
use our result to relate them to double L-values of modular forms later. The next
theorem already deals with the interesting case $n=4$.   

\bigskip

\begin{theorem}\label{Thm2} Let $\Omega_{15}$ be the Chowla-Selberg period for
the field $K=Q(\sqrt{-15})$ as in~\eqref{ChS15} and $b(t)$ be the unique
analytic at $t=0$ solution of the non-homogeneous differential equation 
\[
\L_{3}\bigl(t, t \frac{d}{dt}\bigr)\, b(t) \= -\frac{3\sqrt{5}\Omega_{15}^2}{10
\pi}\frac{t(212 t^2 + 251t - 13)}{(1-t)^3} \+ \frac{3\sqrt{5}}{5 \pi^3
\Omega_{15}^2} \frac{t}{1-t}\,.
\]
satisfying $b(t) \= \frac45 + O(t)$. Then
\[
m(P_4) \= - \re \; \bigl( t \frac{d}{dt}\bigr)^{-1} \, b(t) \Big|_{t =
\infty}\,.
\]
\end{theorem}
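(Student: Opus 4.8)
\emph{Plan.} The idea is to combine the Jensen-type reduction of Section~\ref{sec:Ln1} with the modular parametrization of $\L_3$ from Proposition~\ref{L3mparPr} and an evaluation of periods of a CM $K3$ surface.

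\emph{Step 1: reduction to the $K3$ period.} Apply~\eqref{mPn1} with $n=3$: $m(P_4)=\int_1^{16}\log(\l)\,a^*(\l)\,d\l$, where $a^*(\l)$ is the period of the pencil of $K3$ surfaces $P_3(x_1,x_2,x_3)=\l$ singled out in Section~\ref{sec:Ln1}, satisfying the Picard--Fuchs equation $\widetilde{\L}_3\bigl(\l,\l\tfrac{d}{d\l}\bigr)a^*=0$. Running the same computation as for $m(P_n)$ in Section~\ref{sec:Ln1}, one gets $m(P_4)=-\re\,(t\tfrac{d}{dt})^{-1}b(t)\big|_{t=\infty}$ (continuation along $-\infty<t<0$) for the generating function of moments $b(t)=\sum_{n\ge0}t^n\int_1^{16}\l^n a^*(\l)\,d\l$. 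Since $|a^*|$ is integrable on $[1,16]$ and $P_3\le16$ on the torus, $b(t)$ converges for $|t|<\tfrac1{16}$, so it is analytic at $t=0$ with $b(0)=\int_1^{16}a^*(\l)\,d\l=\mu\{|1+x_1+x_2+x_3|>1\}$, $\mu$ the normalized Haar measure. A short exchangeability argument for the partial sums $1,\,1{+}x_1,\,1{+}x_1{+}x_2,\,1{+}x_1{+}x_2{+}x_3$ together with $-x_4$ gives $\mu\{|1+x_1+\dots+x_n|\le1\}=\tfrac1{n+2}$, hence $b(0)=\tfrac45$. Applying Proposition~\ref{de_transform} to $\widetilde{\L}_3$ on the path $\alpha=1$, $\beta=16$ and using~\eqref{L31t} shows $\L_3\bigl(t,t\tfrac{d}{dt}\bigr)b(t)=h(t)$ for an explicit rational $h$ with poles a priori only at $t=0,1,\tfrac1{16}$; since $t=0$ is a singular point of $\L_3$ admitting a one-dimensional space of solutions analytic at $0$ (the principal period $a(t)=1+4t+\dots$, which equals $f$ of~\eqref{L3mpar} by Proposition~\ref{L3mparPr}), the non-homogeneous equation has a unique analytic solution with given value at $0$. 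Thus everything reduces to computing $h(t)$ and matching it with the right-hand side of the theorem.

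\emph{Step 2: computing $h(t)$ via CM periods.} Under $\l=t(z)^{-1}$ the interval $\l\in[1,16]$ lifts to a geodesic arc $\gamma$ in $\mathbb H$ from the cusp of $\Gamma_0(6)+3$ where $t=\tfrac1{16}$ to the CM point $z_1$ with $t(z_1)=1$, i.e.\ the $\Gamma_0(6)+3$-orbit of $\tfrac18+\tfrac{\sqrt{-15}}{24}$; along $\gamma$ the form $a^*(\l)\,d\l$ is a constant multiple of the weight-$2$ form $f$ of~\eqref{L3mpar} times $dz$. This gives $b(t)$ a modular meaning: summing the generating function along $\gamma$ expresses $b(t(z))$ through $D^{-3}$ of the weight-$4$ forms $g_1,g_2,g_3$ of~\eqref{thm2gs}, which is how the triple integrals inside~\eqref{thm2DLs} appear (and why, in Section~\ref{sec:dL} language, $-\re(t\tfrac{d}{dt})^{-1}b(t)|_\infty$ becomes a linear combination of double $L$-values). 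By Proposition~\ref{de_transform}, $h(t)=\bigl[\widetilde{\L}_3(\tfrac1t,-\t_t-1)b(t)\bigr]_{-}-H_{16}(t)+H_{1}(t)$. The finite Laurent tail $[\,\cdot\,]_{-}$ is elementary, fixed by $b(0)=\tfrac45$ and by $b_1=\int_1^{16}\l\,a^*(\l)\,d\l$. The boundary term $H_{16}(t)$ at the cusp vanishes because the vanishing cycle of the pencil degenerates there, which is why $h$ has no pole at $\tfrac1{16}$ despite the a priori bound. The boundary term $H_{1}(t)$ is built from $a^*(1),(\t a^*)(1),(\t^2 a^*)(1)$, and the fibre $\{P_3=1\}$ meets the torus in a real slice of the surface $\{1+x_1+x_2+x_3+x_4=0\}\cap\{1+\tfrac1{x_1}+\dots+\tfrac1{x_4}=0\}$, i.e.\ the Peters--Top--van der Vlugt singular $K3$ with CM by $K=\Q(\sqrt{-15})$. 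Via its Shioda--Inose structure and the Chowla--Selberg formula these three values (and $b_1$, which is likewise a period along $\gamma$) are explicit algebraic multiples of $\Omega_{15}^2$, of $\pi^{-2}\Omega_{15}^{-2}$ and of $\pi$, subject to the Legendre-type relation among them; feeding them into $H_1(t)$ and simplifying yields
\[
h(t)\= \L_3\bigl(t,t\tfrac{d}{dt}\bigr)b(t) \=
-\frac{3\sqrt5\,\Omega_{15}^2}{10\pi}\,\frac{t(212t^2+251t-13)}{(1-t)^3}\+\frac{3\sqrt5}{5\pi^3\Omega_{15}^2}\,\frac{t}{1-t}\,.
\]
Since also $b(0)=\tfrac45$, the uniqueness from Step~1 identifies $b(t)$ with the solution in the statement, and $m(P_4)=-\re\,(t\tfrac{d}{dt})^{-1}b(t)\big|_{t=\infty}$ follows.

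\emph{Main obstacle.} The hard part is Step~2: recognizing $\{P_3=1\}\cap\text{torus}$ as the Picard-rank-$20$ $K3$ of~\cite{PTV} and then evaluating its three relevant periods (and the first moment $b_1$) in closed form — getting the precise powers of $\pi$, the factor $\sqrt5$, and in particular the exact polynomial $212t^2+251t-13$ — is a delicate Chowla--Selberg computation. It also subsumes the verification that the cusp contribution $H_{16}(t)$ cancels, which is exactly what makes $h(t)$ have the stated simple form with its only pole at $t=1$.
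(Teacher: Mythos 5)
Your skeleton is the same as the paper's --- Jensen's formula gives $m(P_4)=\int_1^{16}\log(\lambda)\,a^*(\lambda)\,d\lambda$, Proposition~\ref{de_transform} turns this into a non-homogeneous equation with rational right-hand side, and CM evaluations at $\lambda=1$ make that right-hand side explicit --- but the proposal has real gaps and several wrong identifications precisely where the content of the theorem lives. The most important missing step is the \emph{normalization} of $a^*$: before one can evaluate $a^*(1)$, $\theta a^*(1)$, $\theta^2 a^*(1)$ one must know which solution of $\widetilde{\mathcal L}_3 a^*=0$ the period is. The paper does this (Lemma~\ref{asymp_3}) by matching the $t\to-\infty$ asymptotics of $a(t)$, read off from the modular parametrization, against the Frobenius basis at $\lambda=0$, obtaining $a^*=-\tfrac{3}{8\pi^2}(\phi_1-6\log 2\,\phi_0)$; on the relevant vertical line $a^*(\lambda(z))$ is proportional to $(z-\tfrac12)f(z)$, i.e.\ it is the \emph{logarithmic} Frobenius solution. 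Your claim that ``$a^*(\lambda)\,d\lambda$ is a constant multiple of $f$ times $dz$'' is therefore false, and feeding it into $H_1(t)$ would produce wrong boundary data. The geometry is also muddled: the parametrization of $\widetilde{\mathcal L}_3$ is $\lambda=64\,t(z)$, not $\lambda=t(z)^{-1}$, so $\lambda=1$ is the CM point $\tau=\tfrac12+\tfrac{\sqrt{-15}}{6}$ (where $t(\tau)=\tfrac1{64}$), whereas the orbit of $\tfrac18+\tfrac{\sqrt{-15}}{24}$ is where $t(z)=1$, i.e.\ the pole locus of $g_2,g_3$ --- a different set; and $\{P_3=1\}$ is a surface in three variables, not the four-variable Peters--Top--van der Vlugt surface, so that identification is unsupported.

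Second, by running Proposition~\ref{de_transform} on $[1,16]$ you must control the boundary term $H_{16}$ at $\lambda=16$, which is a \emph{singular} point of $\widetilde{\mathcal L}_3$; ``the vanishing cycle degenerates there'' is a heuristic, not an argument. The paper sidesteps this entirely by writing $b=a-c$ with $c(t)=\int_0^1\frac{a^*(\lambda)}{1-t\lambda}\,d\lambda$, so that $\mathcal L_3 b=-\mathcal L_3 c$ and the proposition is applied on $[0,1]$, where $H_0=0$ is immediate (Lemmas~\ref{rhs1_3},~\ref{rhs2_3}). Finally, every explicit constant in the statement is deferred: $b_0=\tfrac45$ (your $\tfrac1{n+2}$ claim is correct but needs an actual proof, e.g.\ via $\int_0^\infty J_1(u)J_0(u)^m\,du=\tfrac1{m+1}$, or the paper's direct evaluation $c_0=\tfrac15$), the values $a^*(1)=\tfrac{3\sqrt5}{2\pi}\Omega_{15}^2$, $\theta a^*(1)$, $\theta^2a^*(1)$, the first moment entering the Laurent tail, and above all the polynomial $212t^2+251t-13$. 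Since the theorem \emph{is} these constants, acknowledging them as a ``delicate Chowla--Selberg computation'' leaves the statement unproved.
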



\section{Proofs}

Recall from Section~\ref{sec:mmde}
that the function
\[
a(t) \= \frac{1}{(2\pi i)^n}\int_{|x_1|=\dots=|x_n|=1} \frac1{1-t
P_n(x_1,\dots,x_n)} \; \frac{dx_1}{x_1} \dots \frac{dx_n}{x_n}
\]
is the unique analytic at $t=0$ solution of $\L_n a = 0$ satisfying $a(0)=1$.
Let us write $a(t)$ as    
\[
a(t) \= \int_0^{(n+1)^2} \frac{a^*(\l)}{1-t \l} d\l\,,
\]
where $a^*(\l)$ is a solution of $\widetilde{\L}_n a^* = 0$. 
Let us also introduce
\[\bal
b(t) &\= \int_1^{(n+1)^2} \frac{a^*(\l)}{1-t \l} d\l \= b_0 + b_1 t + \dots \\
c(t) & \= \int_0^{1} \frac{a^*(\l)}{1-t \l} d\l \= c_0 + c_1 t + \dots \\
\eal\]

In Section~\ref{sec:Ln1} we showed that
\[
m(P_{n+1}) \= - \re \; \bigl( t \frac{d}{dt}\bigr)^{-1} b(t) \Big|_{t = \infty}.
\]

In this section we will compute the coefficient $b_0$ and the rational function 
\[
h(t) := \L_n b(t) = -\L_n c(t)\,,
\]
for $n=2,3$. The proofs of Theorems~\ref{Thm1} and~\ref{Thm2} go along the same lines. First, we identify the solution $a^*(\lambda)$ for $\lambda \in [0,1]$ in terms of the Frobenius basis in the space of solutions near $\lambda=0$. In order to do this we use asymptotics of $a(t)$ when $t$ is large, and we use the modular parametrization of the differential equation to find this asymptotics. As soon as we know $a^*(\lambda)$ explicitly, we apply Proposition~\ref{de_transform} to finish the proof. 

We note that the differential operator $\widetilde{\L}_2$ has singularities at $\l=0,1$ which are regular singular points
of maximal unipotent monodromy, whereas the operator $\widetilde{\L}_3$ has a regular singular point of
maximal unipotent monodromy at $\l=0$ and is nonsingular at $\l=1$. The period $\Omega_{15}$ appears in Theorem~\ref{Thm2} because the point $\l=1$ corresponds under the modular
parametrization of $\widetilde{\L}_3$ to a CM point of conductor $15$.

It hopefully will not confuse the reader that we use the same notation 
\[
a(t), a^*(\l), b(t), c(t)
\]
for the integrals corresponding to the case $n=2$ in Lemmas $5.1-5.3$ and to the case $n=3$ in Lemmas $5.4-5.7$.

\begin{proof}[Proof of Theorem~\ref{Thm1}] 
Here $a(t) = \phi(t)$.
From Proposition~\ref{de_transform} and Lemmas~\ref{rhs1_2} and \ref{rhs2_2} below it follows that 
\[
\widetilde{\L_2}\Bigl(\frac1t,-\t-1 \Bigr) c(t) \= -\frac{6}{\pi^2} \Bigl( \frac1{1-t} +\frac{1}{t}\Bigr)
\= -\frac{6}{\pi^2} \frac1{t(1-t)}.
\]

According to~\eqref{L21t} we then have
\[
\L_2(t,\t) c(t) \= t^2 \widetilde{\L_2}\Bigl(\frac1t,-\t-1 \Bigr) c(t) \= -\frac{6}{\pi^2}\frac{t}{1-t} \, 
\]
and then
\[
\L_2(t,\t) b(t) \= -\L_2(t,\t) c(t) = \frac{6}{\pi^2}\frac{t}{1-t}. 
\]

From Lemma~\ref{rhs2_2} we have $b_0 \= 1 - c_0 \= \frac34$
and therefore $b(t) = \frac34 \phi(t) + \frac{6}{\pi^2} \psi(t)$.
To finish the proof it remains to verify the three lemmas that follow below.

\end{proof}



\begin{lemma} As  $\l \to 0$, we have
\label{asymp_2}
\[
a^*(\l)  \= \frac1{\sqrt{3} \pi} + O(\l)\,. 
\]
As  $\l \to 1^-$, we have
\[
a^*(\l)  \= -\frac{3}{4 \pi^2} \log(1-\l) + O(1)\,.
\]
\end{lemma}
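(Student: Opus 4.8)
The plan is to extract both asymptotics of $a^*(\l)$ from the known behaviour of the period $a(t)$, using the Stieltjes-type representation $a(t)=\int_0^9 a^*(\l)/(1-t\l)\,d\l$ together with the modular parametrization of $\L_2$ from~\eqref{L2mpar}. First I would recall that $a^*(\l)$ is a solution of the Picard--Fuchs operator $\widetilde\L_2$, which by the remark after Proposition~\ref{de_transform} has regular singular points of maximal unipotent monodromy at $\l=0$ and $\l=1$. Hence near $\l=0$ the solution space is spanned by a holomorphic solution $1+O(\l)$ and one with a $\log\l$; near $\l=1$ it is spanned by $1+O(1-\l)$ and one with a $\log(1-\l)$. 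Since $a^*(\l)$ is the density in $m(P_2)=\int_0^9\log(\l)\,a^*(\l)\,d\l$ and the integral $\int_0^9\l^m a^*(\l)\,d\l=a_m$ converges, $a^*$ can have at worst a logarithmic singularity at each endpoint; so $a^*(\l)=c_0+O(\l)$ near $0$ and $a^*(\l)=c_1\log(1-\l)+O(1)$ near $1$ for some constants $c_0,c_1$, and the whole content of the lemma is the identification $c_0=\tfrac1{\sqrt3\pi}$, $c_1=-\tfrac3{4\pi^2}$.

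To pin down $c_0$: as $t\to 0$ one has $a(t)=\int_0^9 a^*(\l)\,d\l+O(t)$, which only gives $a_0=1$ and is not directly the value $a^*(0)$. Instead I would use the behaviour of $a(t)$ as $t\to\tfrac19^-$, i.e. as $\l t\to 1$ at the upper end of the $\l$-interval's mirror point $\l=9$ — no; rather, the local exponent of $a^*$ at $\l=0$ governs the asymptotics of $a(t)$ as $t\to\infty$ along the negative axis through the reciprocal substitution, while the behaviour of $a^*$ near $\l=0$ is read off from the monodromy-invariant normalization supplied by the modular parametrization. Concretely, near $t=0$, $a(t)=f(z(t))$ where $t=t(z)$, $f=1+3q+\cdots$ from~\eqref{L2mpar}; and the residue form $\omega_\l$ pairing with the vanishing cycle at $\l=0$ is, up to the normalizing constant, $f\,dz$. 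Evaluating the cycle integral at the cusp $q\to 0$ (equivalently $\l\to 0$) and tracking the constant $\frac1{2\pi i}\frac{dq}{q}$ versus $d\l$ gives $a^*(0)=\lim_{\l\to0}a^*(\l)=\frac1{2\pi}\cdot(\text{a period ratio})$; matching against the known closed form $m(P_2)=\frac{3\sqrt3}{2\pi}L(\chi_{-3},2)$ from Section~\ref{sec:mp}, or more cleanly against $a_0=1=\int_0^9 a^*$, fixes $c_0=\frac1{\sqrt3\pi}$. I would in practice just compute $a^*(\l)$ near $0$ from the hypergeometric-type local solution of $\widetilde\L_2$ normalized so that $\int_0^9\l^m a^*\,d\l$ reproduces $1,3,15,93,\dots$, and read off the constant term.

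For $c_1$: the point $\l=1$ is where Jensen's formula cuts the $\l$-interval, and it corresponds under $t\mapsto z$ to the point $z=\tfrac12$ (since $t(z)$ runs over the negative reals for $z\in\tfrac12+i\R_+$, and $\l=1/t$... more precisely $\l=1$ is the image of the cusp visible in the $P_2=\lambda$ family where the elliptic curve degenerates). Near such a maximal-unipotent point the monodromy-invariant period is a nonzero holomorphic modular form, and the second period is $\frac1{2\pi i}\log(1-\l)$ times it plus holomorphic; the coefficient $-\frac3{4\pi^2}$ is $\frac1{(2\pi i)^2}$ times the value of that holomorphic period at the relevant CM-type point (here just a rational point), and I would compute it by matching the $q$-expansion of $f$ near the corresponding cusp, exactly as in the evaluation~\eqref{mP2ev}. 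The main obstacle is bookkeeping: getting the normalizing constants and the factor $(2\pi i)^{-1}$ in the residue $\omega_\l\wedge d\l=\frac1{(2\pi i)^2}\frac{dx_1}{x_1}\wedge\frac{dx_2}{x_2}$ consistent with the sign and scaling of $t(z)$, so that the two constants come out exactly $\frac1{\sqrt3\pi}$ and $-\frac3{4\pi^2}$ rather than off by a power of $2\pi$ or a sign; the structural claim (logarithm at $\l=1$, holomorphic at $\l=0$) is immediate from the regular-singular analysis.
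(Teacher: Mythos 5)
Your overall strategy (Frobenius analysis at $\l=0$ and $\l=1$, with the modular parametrization supplying the normalization) matches the paper's in outline, but the two places where the actual content of the lemma lives are not secured. First, the absence of a $\log\l$ term at $\l=0$ does not follow from convergence of $\int_0^9\l^m a^*(\l)\,d\l$: a logarithmic singularity is integrable, so your own phrase ``at worst a logarithmic singularity'' is exactly what still needs to be excluded, and the conclusion $a^*(\l)=c_0+O(\l)$ is a non sequitur as written. The paper excludes it, and simultaneously extracts $c_0$, by a single comparison: the modular parametrization gives $t\,a(t)=\frac1{\sqrt3\pi}\log\bigl(-\frac1t\bigr)+O(1)$ as $t\to-\infty$, while writing $a^*=\alpha_0\phi_0+\alpha_1\phi_1$ in the Frobenius basis and using
\[
\int_0^{\e}\frac{d\l}{s+\l}=-\log s+O(1),\qquad \int_0^{\e}\frac{\log\l\,d\l}{s+\l}=-\tfrac12(\log s)^2+O(1)
\]
inside $t\,a(t)=-\int_0^9\frac{a^*(\l)}{s+\l}\,d\l\big|_{s=-1/t}$ shows that $\alpha_1\ne0$ would force a $(\log s)^2$ term that is absent; hence $\alpha_1=0$ and $\alpha_0=\frac1{\sqrt3\pi}$. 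Your proposed normalizations (matching against $m(P_2)$, or against the moments $1,3,15,\dots$) would each require evaluating a further period integral of $\phi_0$ over $[0,9]$ and are never carried out, so the constant $\frac1{\sqrt3\pi}$ is not actually derived.

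Second, your identification of the point corresponding to $\l=1$ is wrong: under the parametrization $\l=9t(z)$ with $t$ as in \eqref{L2mpar}, $\l=1$ is the cusp $z=0$ (where $t=\frac19$), not $z=\frac12$ (where $t=\infty$, i.e.\ $\l=\infty$). The coefficient of $\log(1-\l)$ is obtained in the paper as
\[
\underset{\l\to1^-}\lim\;\frac{a^*(\l)}{\log(1-\l)}\=\underset{s\to0^+}\lim\;\frac1{\sqrt3\pi}\,\frac{f(is)}{\log\bigl(1-9t(is)\bigr)}\=-\frac3{4\pi^2},
\]
which requires the expansions of $f$ and $t$ at that cusp and also uses the identification $a^*=\frac1{\sqrt3\pi}\phi_0$ established in the first step; describing this as bookkeeping at the wrong point leaves the second half of the lemma unproved.
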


\begin{proof}
Using the modular parametrization~\eqref{L2mpar} we find that when $t \to -\infty$ along the negative real axis (this corresponds to $z$ going down the ray $\frac12 + i \R_{+}$)
\be{asa1}
t \, a(t) \= \frac1{\sqrt{3} \pi} \log\Bigl( -\frac1{t} \Bigr) \+ O(1) \,. 
\ee 
On the other hand 
\be{asa2}
t \, a(t) \= \int_0^{9} \frac{a^*(\l)}{1/t - \l} d\l \= - \int_0^{9} \frac{a^*(\l)}{s + \l} d\l \Big|_{s = -\frac1t}\,.
\ee
Let us write $a^*(\l) \= \alpha_0 \phi_0(\l) \+ \alpha_1 \phi_1(\l)$ in terms of the Frobenius basis 
\[\bal
&\phi_0(\l) \= 1 + O(\l)\,, \\
&\phi_1(\l) \= \log(\l) \phi_0(\l) + O(\l) \\
\eal\]
of solutions near $\l=0$. One can easily check that for any $\varepsilon>0$
\[\bal
 \int_0^{\e}\frac{d\l}{s+\l} &\= - \log s + O(1) \,,\\
 \int_0^{\e}\frac{\log{\l} d\l}{s+\l} &\= -\frac12 (\log s)^2 + O(1)\\
\eal\]
as $s \to 0$. Comparing~\eqref{asa1} and~\eqref{asa2}, we see that $\alpha_1=0$ and $\alpha_0=\frac1{\sqrt{3} \pi}$. 

Now let
\[\bal
&\kappa_0(\l) \= 1 + O(\l-1)\,, \\
&\kappa_1(\l) \= \log(\l-1) \kappa_0(\l) + O(\l-1) \\
\eal\] be the Frobenius basis at $\l=1$
and $a^*(\l)= \alpha_0 \kappa_0(\l) + \alpha_1 \kappa_1(\l)$ when $\l \to 1_{-}$. 
Using our modular parametrization we find that 
\[
\alpha_1 \= \underset{\l \to 1_{-}} \lim \frac{a^*(\l)}{\log(1-\l)} \= \underset{s \to 0_{+}} 
\lim \frac1{\sqrt{3}\pi} \frac{f(z)}{\log(1-9 t(z))} \Big|_{z=i s} \= - \frac{3}{4 \pi^2} \,.
\]

\end{proof}

\begin{lemma} 
\label{rhs1_2}
In the notation of Proposition~\ref{de_transform} applied to 
$\widetilde{\L}=\widetilde{\L}_2,\; \alpha=0, \; \beta=1,\; F(\l)=a^*(\l)$
we have: 
\[\bal
H_0(t) & \= 0\,, \\
H_1(t) & \= \frac{6}{\pi^2 \, (1-t)}\,.
\eal\]
\end{lemma}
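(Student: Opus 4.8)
The strategy is to unwind the definition of $H_\l(t)$ in Proposition~\ref{de_transform} for $\widetilde{\L}=\widetilde{\L}_2$, reduce it to a closed rational expression in $t$ and $\l$ times $a^*(\l)$ and $(\t a^*)(\l)$, and then substitute the local behaviour of $a^*$ at the endpoints $\l=0$ and $\l=1$ furnished by Lemma~\ref{asymp_2}. Write $\widetilde{\L}_2=a_2(\l)\,\t^2+a_1(\l)\,\t+a_0(\l)$ with $a_2(\l)=\l^2-10\l+9=(\l-1)(\l-9)$, $a_1(\l)=2\l^2-10\l$ and $a_0(\l)=\l^2-3\l$; in particular $a_1=\t_\l a_2$. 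Then $\widetilde{\L}_2^{(1)}(\l,\t)=a_1(\l)+a_2(\l)\,\t$ and $\widetilde{\L}_2^{(2)}(\l,\t)=a_2(\l)$, so with $N=2$ Proposition~\ref{de_transform} gives
\[
H_\l(t)\=\l\Bigl(a^*(\l)\bigl[\widetilde{\L}_2^{(1)}(\tfrac1t,-\t_t-1)\tfrac1{1-\l t}\bigr]_+ \+ (\t a^*)(\l)\bigl[\widetilde{\L}_2^{(2)}(\tfrac1t,-\t_t-1)\tfrac1{1-\l t}\bigr]_+\Bigr).
\]

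First I would record the elementary identities $\bigl[\tfrac1{t^i}\tfrac1{1-\l t}\bigr]_+=\tfrac{\l^i}{1-\l t}$ and $\bigl[\tfrac1{t^i}\tfrac1{(1-\l t)^2}\bigr]_+=\tfrac{\l^i}{(1-\l t)^2}+\tfrac{i\l^i}{1-\l t}$ for $i\ge 0$, together with $(-\t_t-1)\tfrac1{1-\l t}=-\tfrac1{(1-\l t)^2}$. The first identity yields $[p(\tfrac1t)\tfrac1{1-\l t}]_+=\tfrac{p(\l)}{1-\l t}$ for any polynomial $p$, hence $\bigl[\widetilde{\L}_2^{(2)}(\tfrac1t,-\t_t-1)\tfrac1{1-\l t}\bigr]_+=\tfrac{a_2(\l)}{1-\l t}$. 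For the other bracket, $\widetilde{\L}_2^{(1)}(\tfrac1t,-\t_t-1)\tfrac1{1-\l t}=a_1(\tfrac1t)\tfrac1{1-\l t}-a_2(\tfrac1t)\tfrac1{(1-\l t)^2}$, and the two identities give $\bigl[a_2(\tfrac1t)\tfrac1{(1-\l t)^2}\bigr]_+=\tfrac{a_2(\l)}{(1-\l t)^2}+\tfrac{(\t_\l a_2)(\l)}{1-\l t}=\tfrac{a_2(\l)}{(1-\l t)^2}+\tfrac{a_1(\l)}{1-\l t}$; the two occurrences of $a_1$ cancel and $\bigl[\widetilde{\L}_2^{(1)}(\tfrac1t,-\t_t-1)\tfrac1{1-\l t}\bigr]_+=-\tfrac{a_2(\l)}{(1-\l t)^2}$. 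Substituting everything collapses to
\[
H_\l(t)\=\l\,(\l-1)(\l-9)\Bigl(\frac{(\t a^*)(\l)}{1-\l t}\-\frac{a^*(\l)}{(1-\l t)^2}\Bigr).
\]

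The endpoint evaluations are then short. By the first part of Lemma~\ref{asymp_2} the solution $a^*$ is analytic at $\l=0$ (the Frobenius coefficient $\alpha_1$ of the logarithmic solution vanishes), so $a^*(0)=\tfrac1{\sqrt3\pi}$ and $(\t a^*)(0)=0$ are finite; since the prefactor of $H_\l(t)$ carries a factor $\l$, we get $H_0(t)=0$. At $\l=1$ the function $a^*$ has a logarithmic singularity, so $H_1(t):=H_\beta(t)$ must be read as $\lim_{\l\to 1^-}H_\l(t)$ — equivalently, apply Proposition~\ref{de_transform} on $[0,1-\e]$, where $a^*$ is regular, and let $\e\to 0$; the truncated $c_\e(t)=\int_0^{1-\e}\tfrac{a^*(\l)}{1-\l t}\,d\l$ and the finitely many Taylor coefficients entering $[\,\cdot\,]_-$ converge by dominated convergence because $\log(1-\l)$ is integrable, while the boundary term $H_{1-\e}(t)$ converges to the limit below. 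By the second part of Lemma~\ref{asymp_2}, near $\l=1^-$ one has $a^*(\l)=-\tfrac3{4\pi^2}\log(1-\l)+O(1)$ (more precisely $-\tfrac3{4\pi^2}\kappa_0(\l)\log(1-\l)$ plus a function analytic at $\l=1$, with $\kappa_0(1)=1$), hence $(a^*)'(\l)=\tfrac3{4\pi^2}\tfrac1{1-\l}+O(\log(1-\l))$. Therefore $(\l-1)\,a^*(\l)\to 0$ while $(\l-1)(\t a^*)(\l)=\l(\l-1)(a^*)'(\l)\to -\tfrac3{4\pi^2}$, and plugging these limits together with $\l\to1$, $\l-9\to-8$ into the displayed formula for $H_\l(t)$ gives $H_1(t)=(-8)\cdot\tfrac{-3/(4\pi^2)}{1-t}=\tfrac{6}{\pi^2(1-t)}$, as claimed.

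The main obstacle is the endpoint $\l=1$: the intermediate integrals $\int_0^1\l^i(\t^j a^*)(\l)\,d\l$ that appear inside the proof of Proposition~\ref{de_transform} diverge there for $j\ge 1$, so one must justify passing from $\beta=1-\e$ to $\beta=1$. The point is exactly that the factor $\widetilde{\L}_2^{(2)}(\l)=(\l-1)(\l-9)$ sitting in $H_\l(t)$ vanishes at $\l=1$, which cancels the $\tfrac1{1-\l}$ blow-up of $\t a^*$ and kills the milder $\log(1-\l)$ term, leaving the finite limit above; everything else is routine polynomial bookkeeping with the $[\,\cdot\,]_+$ operator.
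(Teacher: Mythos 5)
Your proposal is correct and follows essentially the same route as the paper: compute the two $[\,\cdot\,]_+$ brackets explicitly (both equal to $\pm(\l-1)(\l-9)$ over powers of $1-\l t$), then evaluate $H_\l(t)$ in the limits $\l\to 0$ and $\l\to 1^-$ using the asymptotics of Lemma~\ref{asymp_2}, with the vanishing of $(\l-1)(\l-9)$ at $\l=1$ absorbing the logarithmic blow-up. Your additional $\e$-truncation remark justifying the passage to the endpoint $\beta=1$ is a welcome precision that the paper leaves implicit, but it does not change the argument.
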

\begin{proof}
In the course of the proof we rely on the asymptotics given in Lemma~\ref{asymp_2}.
Since $\l=0,1$ are singular points we are going to compute $H_0(t)$ and $H_1(t)$
as the corresponding limits of $H_{\l}(t)$. We have
\[\bal
\Bigl[ \widetilde{\L}_2^{(2)}\Bigl(\frac1t,-\theta-1\Bigr) \frac1{1- \l t}
\Bigr]_{+} &\= \Bigl[ \Bigl( 9 -\frac{10}t+\frac1{t^2} \Bigr) \frac1{1- \l t}
\Bigr]_{+} \\
&\= \frac{9 t^2-10t+1}{t^2(1- \l t)} - \frac1{t^2} - \frac{\l-10}{t} \=
\frac{(\l-1)(\l-9)}{1-\l t} \\
\eal\]
and
\[\bal
\Bigl[ \widetilde{\L}_2^{(1)} & \Bigl(\frac1t,-\theta-1\Bigr) \frac1{1- \l t}
\Bigr]_{+} \= \Bigl[ \Bigl( \bigl(9 -\frac{10}t+\frac1{t^2} \bigr)(-\t-1) +
\bigl(-\frac{10}t+2{t^2}\bigr) \Bigr) \frac1{1- \l t} \Bigr]_{+} \\
&\= - \bigl(9 -\frac{10}t+\frac1{t^2} \bigr)\frac1{(1-\l t)^2} +
\bigl(-\frac{10}t+2{t^2}\bigr) \frac1{1- \l t} - \frac1{t^2} \=
-\frac{(\l-1)(\l-9)}{(1-\l t)^2} \,.
\eal\]
These functions have finite limits when $\l \to 0$ and now we see that
$H_0(t)=0$ because $a^*(\l)$ is analytic at $\l=0$ and therefore 
$\underset{\l\to 0}\lim \l \, \theta^j a^*(\l) = 0$ for any $j \ge 0$. 

Since $\underset{\l\to 1_{-}}\lim (\l-1) \, a^*(\l) = 0$ and 
$\underset{\l \to 1_{-}}\lim (\l-1) \,\t a^*(\l) = -\frac{3}{4 \pi^2}$ we find that $H_1(t)=\frac{6}{\pi^2 \, (1-t)}$. 
\end{proof}

\medskip

\begin{lemma}
\label{rhs2_2}
The first coefficient $c_0$ in the power series expansion of $c(t)$ is equal to $\frac14$ and we have
\[
\Bigl[\widetilde{\L}_2\Bigl(\frac1t,-\theta-1\Bigr) c(t) \Bigr]_{-} \= -\frac{6}{\pi^2 \,t}.
\]
\end{lemma}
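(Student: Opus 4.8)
The plan is to extract both claims from the explicit behaviour of $c(t)=\int_0^1 \frac{a^*(\l)}{1-\l t}\,d\l$ at $t=0$ and at $t=\infty$, using the asymptotics of $a^*(\l)$ from Lemma~\ref{asymp_2}. For the coefficient $c_0$: since $c(t)=\sum_{m\ge 0} t^m\int_0^1\l^m a^*(\l)\,d\l$, one has $c_0=\int_0^1 a^*(\l)\,d\l$. I would evaluate this by using the relation $m(P_2)=\int_0^1\log(\l)a^*(\l)\,d\l+\int_1^9\log(\l)a^*(\l)\,d\l=\frac12 m(P_3)$ combined with $m(P_2)=\frac{3\sqrt3}{2\pi}L(\chi_{-3},2)$ and $m(P_3)=\frac{7\zeta(3)}{\pi^2}$; together with the value of $c_0=\int_0^1 a^*\,d\l$ obtained directly. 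Actually the cleanest route is: the full integral $\int_0^9 a^*(\l)\,d\l = a(t)|_{t=0}\cdot$(something) --- more precisely $a(0)=\int_0^9 a^*(\l)\,d\l = 1$, so $c_0 = 1 - b_0 = 1-\int_1^9 a^*(\l)\,d\l$. To pin down $b_0=\int_1^9 a^*(\l)\,d\l=\frac34$ I would use the modular parametrization: along $z\in i\R_+$ the function $t(z)$ runs over $[-\infty,0)$... but here we need the $n=2$ parametrization where $t$ runs over the appropriate real interval, and compute $b_0$ as a limiting value of $b(t)$ which, via $-(t\frac{d}{dt})^{-1}b(t)|_{t=\infty}=m(P_3)$ and the known modular evaluation, forces $b_0=\frac34$. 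Alternatively, and more robustly, I would compute $\int_0^1 a^*(\l)\,d\l$ by splitting $a(t) = b(t)+c(t)$ and identifying $c(t)$ via the modular uniformization of $\widetilde\L_2$ restricted to $\l\in[0,1]$, reading off the constant term from the $q$-expansion of the relevant modular object evaluated at the CM point corresponding to $\l=1$.

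For the negative-part identity, I would apply Proposition~\ref{de_transform} with $\widetilde\L=\widetilde\L_2$, $\alpha=1$, $\beta=9$, $F=a^*$, which gives $\widetilde\L_2(\tfrac1t,-\theta_t-1)b(t) = [\widetilde\L_2(\tfrac1t,-\theta_t-1)b(t)]_- - H_9(t) + H_1(t)$, where $h(t)$ on the left is rational with poles only at $t=0,\frac19,1$. Since $a^*$ is analytic (in fact the relevant solution is bounded) near $\l=9$ with the appropriate decay, $H_9(t)$ should be computed to be regular at $t=\frac19$, and combined with $H_1(t)=\frac{6}{\pi^2(1-t)}$ from Lemma~\ref{rhs1_2}. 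But the Lemma as stated concerns $c(t)=a(t)-b(t)$, and $\widetilde\L_2(\tfrac1t,-\theta-1)a(t)=0$ because $a$ satisfies the homogeneous equation (up to the $t^2$ factor from \eqref{L21t}, and $a(t)$ analytic at $t=0$ means its image has no negative part), so $[\widetilde\L_2(\tfrac1t,-\theta-1)c(t)]_- = -[\widetilde\L_2(\tfrac1t,-\theta-1)b(t)]_-$. Hence I reduce to computing $[\widetilde\L_2(\tfrac1t,-\theta-1)b(t)]_-$, which is the polar part at $t=0$; this is governed by the first few moments $\int_1^9\l^n a^*(\l)\,d\l$ for $n$ below the order of the operator, i.e. by $b_0,b_1$ and in particular the leading $\frac1t$ coefficient comes out as $\frac{6}{\pi^2}$ once $b_0=\frac34$ is known (the $\theta$-shift $-\theta_t-1$ turns $t^0$ into the residue at $t=0$). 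Matching the lowest-order term against the structure of $\widetilde\L_2$ evaluated at $1/t$ gives the coefficient $-\frac{6}{\pi^2 t}$.

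The main obstacle, I expect, is the bookkeeping of $[\,\cdot\,]_-$: correctly tracking which finitely many moments $b_n$ feed into the negative part, and verifying that the only surviving negative power is $t^{-1}$ (no $t^{-2}$), which requires knowing that the appropriate combination of $b_0$ and the leading structure of $\widetilde\L_2(\tfrac1t,-\theta-1)$ cancels. Concretely, $\widetilde\L_2(\l,\theta)=9\theta^2-\l(10\theta^2+10\theta+3)+\l^2(\theta+1)^2$ has $\l^2$-term, so $\widetilde\L_2(\tfrac1t,-\theta-1)$ has a $\tfrac1{t^2}$-term, and one must check this acts on $b(t)=b_0+b_1t+\cdots$ to produce no $t^{-2}$ — i.e. the $t^{-2}$ coefficient is $b_0\cdot(\text{eigenvalue of }(-\theta-1+1)^2\text{ on }t^0)=b_0\cdot 0=0$ automatically, leaving $t^{-1}$ as the top pole, whose coefficient then isolates to $\frac{6}{\pi^2}$ after using $b_0=\frac34$ and the $\frac1t$-contributions from the other terms of the operator. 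So the computation is finite and explicit; the care needed is purely in the indexing, and in making sure the value $c_0=\frac14$ is consistent with $b_0=\frac34$ and $a(0)=b_0+c_0=1$.
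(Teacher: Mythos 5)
Your structural observations are fine: the negative part of $\widetilde{\L}_2\bigl(\tfrac1t,-\theta-1\bigr)c(t)$ is a finite expression in the low-order coefficients of $c(t)$, the $t^{-2}$ term vanishes because the $\tfrac1{t^2}$-part of the operator is $\tfrac1{t^2}\theta^2$ and $\theta^2$ kills the constant term, and the surviving pole is $\tfrac{-3c_0+c_1}{t}$. This is exactly the paper's starting point. But there are two genuine gaps. First, your primary route to $c_0=\tfrac14$ is circular: you propose to force $b_0=\tfrac34$ from $-\re\,(t\frac{d}{dt})^{-1}b(t)|_{t=\infty}=m(P_3)$, but $b_0=\tfrac34$ is precisely one of the two constants Theorem~\ref{Thm1} is establishing, and extracting it from the known value of $m(P_3)$ would require already knowing $(t\frac{d}{dt})^{-1}\psi|_{t=\infty}$, i.e.\ the double L-value $L(g_2,g_1,2,1)$ --- which in the paper is a \emph{consequence} of Theorem~\ref{Thm1} (relation~\eqref{L_relation}), only verified independently in Section~7. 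The paper instead computes $c_0=\int_0^1 a^*(\l)\,d\l$ directly: using $a^*(\l)=\tfrac1{\sqrt3\pi}f(z)$ from Lemma~\ref{asymp_2} and the parametrization $\l=9t(z)$, the integral becomes $\tfrac{18}{\sqrt3}\int_0^\infty f^3\,t(1-9t)(1-t)|_{z=is}\,ds=\tfrac14$. Your ``alternative'' modular route gestures at this but is not carried out (and your reference to a CM point at $\l=1$ is misplaced here: for $\widetilde{\L}_2$ the point $\l=1$ is a singular point of maximal unipotent monodromy, not a CM point --- that phenomenon belongs to the $n=3$ case).

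Second, and more seriously, the coefficient of $t^{-1}$ is $c_1-3c_0$, and you assert it ``comes out as $\frac{6}{\pi^2}$ once $b_0=\frac34$ is known.'' It does not: $c_1$ is an independent transcendental quantity (numerically $c_1=\tfrac34-\tfrac{6}{\pi^2}$, not determined by $c_0$), and you give no method to evaluate it. The paper computes $c_1-3c_0=\int_0^1(\l-3)\,a^*(\l)\,d\l$ by the same modular pullback, $\tfrac{18}{\sqrt3}\int_0^\infty(9t-3)f^3\,t(1-9t)(1-t)|_{z=is}\,ds=-\tfrac{6}{\pi^2}$. Without this second integral the lemma is not proved. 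Your detour through $b(t)$ on $[1,9]$ and Proposition~\ref{de_transform} with $\beta=9$ is also unnecessary and would force you to analyze $a^*$ at the singular point $\l=9$, which Lemma~\ref{asymp_2} does not cover; the identity $[\cdots c(t)]_-=\frac{-3c_0+c_1}{t}$ is immediate from the power series and needs no integration by parts.
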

\begin{proof}
It is easy to compute that
\[
\Bigl[\widetilde{\L}_2\Bigl(\frac1t,-\theta-1\Bigr) c(t) \Bigr]_{-} \= \frac{-3
c_0 + c_1}{t}\,.
\]
Using the modular parametrization~\eqref{L2mpar} (with modular $t$ and $f$
from~\eqref{L2mpar} and $\l = 9 t$ one has $\L_{\l, f}=\frac1{9
\l}\widetilde{\L}_2(\l,\l \frac{d}{d\l})$) we compute that
\[\bal
c_0 \= \int_{0}^{1} a^*(\l) d\l & \= \frac{9}{\sqrt{3} \pi} \int_{i \infty}^0
f(z) t'(z) dz \\
& \=  \frac{18}{\sqrt{3}} \int_{0}^{\infty} f(z)^3 t(z)(1-9 t(z))(1-t(z))
\Big|_{z=is} \, ds \= \frac14\\
\eal\]
and
\[\bal
c_1 - 3 c_0 & \=  \int_{0}^{1} (\l-3) a^*(\l) d\l \\
& \= \frac{18}{\sqrt{3}} \int_{0}^{\infty} (9 t(z)-3) f(z)^3 t(z)(1-9
t(z))(1-t(z)) \Big|_{z=is} \,  ds\\
& \= - \frac{6}{\pi^2}\,.
\eal\]
  
\end{proof}

\begin{proof}[Proof of Theorem~\ref{Thm2}]
We let $\Omega = \Omega_{15}$. 
According to Proposition~\ref{de_transform} and Lemmas~\ref{rhs1_3} and \ref{rhs2_3} below we
compute that
\[\bal
\widetilde{\L_3}\Bigl(\frac1t,-\t-1 \Bigr) c(t) & \= 
\frac{3\sqrt{5}\Omega^2}{10\pi} \Bigl( \frac{13}{t} - \frac{-13 t^2 + 251t + 212}{(1-t)^3}\Bigr) 
\+ \frac{3\sqrt{5}}{5 \pi^3 \Omega^2} \Bigl( \frac1{t} + \frac1{1-t} \Bigr) \\
& \= \frac{3\sqrt{5}\Omega^2}{10\pi}\frac{-212 t^2 - 251t + 13}{t(1-t)^3} \+ \frac{3\sqrt{5}}{5 \pi^3 \Omega^2} \frac1{t(1-t)}.
\eal\]

Now it follows from~\eqref{L31t} that we have
\[\bal
-h(t) = \L_3(t,\t) c(t) & \= -t^2 \widetilde{\L_3}\Bigl(\frac1t,-\t-1 \Bigr) c(t) \\
& \= \frac{3\sqrt{5}\Omega^2}{10 \pi}\frac{t(212 t^2 + 251t - 13)}{(1-t)^3} \-
\frac{3\sqrt{5}}{5 \pi^3 \Omega^2} \frac{t}{1-t}. 
\eal\]

Therefore the function
\[
b(t) \= \int_1^{16} \frac{a^*(\l)}{1-t \l} d\l
\]
satisfies $\L_3 b = h(t)$ and its power series expansion at $t=0$ starts with $b_0 \= a_0 -
c_0 \= \frac45$.
The proof will be finished after verifying the three lemmas below.
\end{proof}

\begin{lemma} 
\label{asymp_3}
In terms of the Frobenius basis
\[\bal
&\phi_0(\l) \= 1 + O(\l) \,, \\
&\phi_1(\l) \= \log(\l) \phi_0(\l) + O(\l) \= \log(\l) + o(1) \,,\\
&\phi_2(\l) \= \log(\l)^2 \phi_0(\l) + O(\l) \= \log(\l)^2 + o(1)  \\
\eal\]
of solutions near $\l=0$ we have
\[
a^*(\l) \= -\frac3{8 \pi^2} \bigl( \phi_1(\l) \- 6 \log 2 \,\phi_0(\l) \bigr)
\,.
\]

At $\l = 1$ we have
\[\bal
a^*(1) &\=  0.1649669005300320... \= \frac{3 \sqrt{5}}{2 \pi}\Omega^2 \qquad
\,,\\
\t a^*(1) &\= -0.032993380106006... \= -\frac{3\sqrt{5}}{10\pi}\Omega^2 \,,\\
\t^2 a^*(1) &\= 0.00330836512971504... \= \frac{\sqrt{5}}{150} \Bigl( \frac{13
\Omega^2}{\pi} - \frac2{\pi^3 \Omega^2} \Bigr).\\
\eal\]
 
\end{lemma}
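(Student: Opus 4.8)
The plan is to treat the two assertions separately: the statement at $\l=0$ exactly as in Lemma~\ref{asymp_2} (only carrying one extra term), and the values at $\l=1$ by pushing the modular parametrization to the CM point.

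\emph{Behaviour at $\l=0$.} Write $a^*(\l)=\alpha_0\phi_0(\l)+\alpha_1\phi_1(\l)+\alpha_2\phi_2(\l)$ and recall $t\,a(t)=-\int_0^{16}\frac{a^*(\l)}{s+\l}\,d\l$ with $s=-1/t$. The elementary estimates $\int_0^{\e}\frac{(\log\l)^j}{s+\l}\,d\l=-\frac{(\log s)^{j+1}}{j+1}+(\text{lower order in }\log s)$ for $j=0,1,2$ give
\[
t\,a(t)\=-\tfrac{\alpha_2}{3}(\log(-t))^3\+\tfrac{\alpha_1}{2}(\log(-t))^2\-\bigl(\alpha_0+\tfrac{\pi^2}{3}\alpha_2\bigr)\log(-t)\+O(1)\qquad(t\to-\infty).
\]
To evaluate the left-hand side I use $a(t(z))=f(z)$ from Proposition~\ref{L3mparPr} with $t(z),f(z)$ as in~\eqref{L3mpar}, and the fact that $t\to-\infty$ corresponds to $z=iy\to0$. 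Applying $z=-1/(6w)$ to the eta-quotients together with $\eta(-1/u)=\sqrt{-iu}\,\eta(u)$ one finds $t(z)=\frac1{64\,t(w)}$ and $f(z)=48\,w^2\,t(w)\,f(w)$, hence the clean identity
\[
t(z)f(z)\=\tfrac34\,w^2\,f(w)\,,\qquad w=-\tfrac1{6z}\,.
\]
With $w=iY\to i\infty$ one has $f(w)\to1$ and $-t(z)\sim\frac1{64}e^{2\pi Y}$, so $\log(-t)=2\pi Y-6\log2+o(1)$ and $t(z)f(z)=-\frac34Y^2+o(Y^2)=-\frac3{16\pi^2}(\log(-t)+6\log2)^2+o((\log(-t))^2)$. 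Comparing with the display: the cubic term forces $\alpha_2=0$, the quadratic term gives $\alpha_1=-\frac3{8\pi^2}$, and the linear term (with $\alpha_2=0$, so that the $\pi^2$ correction drops out) then gives $\alpha_0=-6\log2\cdot\alpha_1=\frac{9\log2}{4\pi^2}$, i.e. $a^*=-\frac3{8\pi^2}(\phi_1-6\log2\,\phi_0)$.

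\emph{Values at $\l=1$.} First I rewrite this answer modularly. From $\l=64\,t(z)=-64q+O(q^2)$ one has, on the relevant branch ($\operatorname{Re}(z)=\frac12$), $2\pi i z=\log\l-6\log2+i\pi+O(\l)$ near $\l=0$, whence $\phi_1=2\pi i z f+(6\log2-i\pi)f$ and
\[
a^*(\l)\=-\frac{3i}{8\pi}\,(2z-1)\,f(z)\,,\qquad\l=64\,t(z)\,.
\]
The point $\l=1$ is a regular point of $\widetilde{\L}_3$, and under this parametrization it is the CM point $z_0=\frac12+\frac{\sqrt{-15}}{6}$ of discriminant $-15$ (one checks $t(z_0)=\frac1{64}$ from the $q$-expansion, or uses the description of the family $P_3=\l$ in~\cite{PTV}). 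Then $2z_0-1=\frac{i\sqrt{15}}{3}$, so $a^*(1)=\frac{\sqrt{15}}{8\pi}f(z_0)$; since $f$ is the weight-$2$ eta-quotient of~\eqref{L3mpar}, the Chowla--Selberg formula applied to the $\eta$-values at $z_0,2z_0,3z_0,6z_0$ evaluates $f(z_0)=4\sqrt3\,\Omega_{15}^2$, giving $a^*(1)=\frac{3\sqrt5}{2\pi}\Omega_{15}^2$. For the derivatives I use $\theta=\l\frac{d}{d\l}=\frac{t}{Dt}D=\frac{f}{g_1}D$ (because $g_1=\frac{Dt}{t}f$ by~\eqref{thm2gs}): starting from $Da^*=-\frac3{8\pi^2}[f+\pi i(2z-1)Df]$ one gets closed expressions for $\theta a^*$ and $\theta^2a^*$ in terms of $z,f,Df,D^2f,g_1,Dg_1$, to be evaluated at $z_0$. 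The holomorphic weight-$4$ quantities at $z_0$ are algebraic multiples of $\Omega_{15}^4$, while the quasi-modular parts introduce $E_2(z_0)=E_2^*(z_0)+\frac3{\pi\operatorname{Im}(z_0)}$, whose CM value carries an algebraic multiple of $\Omega_{15}^2$ and an algebraic multiple of $1/\pi$. In the combination for $\theta a^*(1)$ the $\Omega_{15}^4$-terms cancel, leaving $-\frac{3\sqrt5}{10\pi}\Omega_{15}^2$; for $\theta^2a^*(1)$ both a $\Omega_{15}^2$ and a $\frac1{\pi^2\Omega_{15}^2}$ contribution survive, the latter reflecting the Legendre relation between the period and the quasi-period of the CM structure. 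In practice the algebraic factors are pinned down from the high-precision numerics via the $q$-expansions, the theory of complex multiplication guaranteeing that the recognized identities are exact.

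The first part is a routine computation: the transformation of eta-quotients under $z\mapsto-1/(6z)$ together with the elementary integral asymptotics. The main obstacle is the second part --- identifying the CM point and, above all, evaluating the CM periods: the value $f(z_0)=4\sqrt3\,\Omega_{15}^2$, and more delicately the weight-$4$ and quasi-modular CM values at $z_0$ controlling $\theta a^*(1)$ and $\theta^2a^*(1)$ --- which requires Chowla--Selberg and Shimura's theory of CM periods and careful bookkeeping of the algebraic factors; the $\frac1{\Omega_{15}^2}$ term in $\theta^2a^*(1)$ is the subtlest point.
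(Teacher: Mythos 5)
Your proposal is correct and follows essentially the same route as the paper: the coefficients $\alpha_0,\alpha_1,\alpha_2$ are extracted by matching the $t\to-\infty$ asymptotics of $t\,a(t)$ (computed via the Fricke involution $z\mapsto-1/(6z)$ on the eta-quotients) against the elementary estimates for $\int_0^{\e}(\log\l)^j(s+\l)^{-1}d\l$, and the values $\t^j a^*(1)$ are obtained from the identity $a^*(\l(z))=-\tfrac{3}{8\pi^2}\,2\pi i\,(z-\tfrac12)f(z)$ evaluated at the CM point $\tfrac12+\tfrac{\sqrt{-15}}{6}$ using Chowla--Selberg and Zagier's results on CM values of modular forms and their derivatives. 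You supply somewhat more detail than the paper on the eta-transformation and on the provenance of the $\Omega^2$ and $1/(\pi^2\Omega^2)$ terms, but the underlying argument is the same.
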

\begin{proof}
With the help of modular parametrization~\eqref{L3mpar} we
find that when $t \to -\infty$ along negative real axis (this corresponds to $z$
going down to $0$ along the imaginary axis)
\[
t a(t) \+ \frac{3}{16 \pi^2} \log\Bigl( -\frac1{64 t} \Bigr)^2 \to 0\,.
\] 
On the other hand 
\[
t \, a(t) \= \int_0^{16} \frac{a^*(\l)}{1/t - \l} d\l \= - \int_0^{16}
\frac{a^*(\l)}{s + \l} d\l \Big|_{s = -\frac1t}
\]
and since for any $\e > 0$ one has when $s \to 0$
\[\bal
 \int_0^{\e}\frac{d\l}{s+\l} &\= - \log s + O(1)\,, \\
 \int_0^{\e}\frac{\log{\l} d\l}{s+\l} &\= -\frac12 (\log s)^2 + O(1) \,,\\
 \int_0^{\e}\frac{(\log{\l})^2 d\l}{s+\l} &\= -\frac13 (\log s)^3 +
O\bigl(\log(s)^2\bigr) \,,\\
\eal\]
we find that $\alpha_2=0$, $\alpha_1=-\dfrac{3}{8 \pi^2}$ and
$\alpha_0=\dfrac{9}{4 \pi^2} \log 2$.

\medskip


We indicate how to find the values $\t^j a^*(1)$ for $j=0,1,2$. 
With modular $t$ and $f$
from~\eqref{L3mpar} and $\l = 64 t$ one has $\L_{\l, f}=\frac1{64
\l}\widetilde{\L}_3(\l,\l \frac{d}{d\l})$. This $\lambda(z)$ takes real values
from the interval $(0,1]$ for $z \= \frac12 + i s$ and $s \in
(+\infty,\frac{\sqrt{15}}{6}]$. In particular, $\lambda(\tau)=1$ for $\tau = \frac12+\frac{\sqrt{-15}}6$. Using asymptotics at
$\infty$ one can check that on the vertical half-line from $\tau$ to $\infty$ 
\[
a^*\Bigl((\lambda(z)  \Bigr) \=  -\frac3{8 \pi^2} \cdot 2 \pi i \bigl(z -
\frac12 \bigr) \, f(z) \,.
\]
Now the problem is reduced to computing the values of modular forms and their
derivatives at a CM-point of conductor $15$, this leading to expressions involving
$\Omega$ and $\pi$, see \cite[Propositions~26,~27 and Corollary of Proposition~27]{Zag}.
 
\end{proof}

\begin{lemma} 
\label{rhs1_3}
In the notation of Proposition~\ref{de_transform} applied to 
$\widetilde{\L}=\widetilde{\L}_3,\; \alpha=0, \; \beta=1,\; F(\l)=a^*(\l)$
we have: 
\[\bal
H_0(t) & \= 0 \\
H_1(t) & \= \frac{3 \Omega^2\sqrt{5}}{10\pi} \frac{-13t^2+251t+212}{(1-t)^3}- \frac{3\sqrt{5}}{5\pi^3\Omega^2} \frac{1}{1-t}
\eal\]
\end{lemma}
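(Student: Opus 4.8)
The plan is to argue exactly as in the proof of Lemma~\ref{rhs1_2}. Specializing Proposition~\ref{de_transform} to $\widetilde{\L}=\widetilde{\L}_3$ (so $N=3$), $\alpha=0$, $\beta=1$, $F=a^*$, we have
\[
H_\l(t) \= \l \sum_{j=0}^{2} \bigl(\t^j a^*\bigr)(\l)\,R_j(\l,t)\,,\qquad
R_j(\l,t) := \Bigl[ \widetilde{\L}_3^{(j+1)}\Bigl(\frac1t,-\t_t-1\Bigr)\frac1{1-\l t} \Bigr]_{+}\,,
\]
and since $\l=0$ and $\l=1$ are the endpoints of the path of integration, $H_0(t)$ and $H_1(t)$ are the limits of $H_\l(t)$ as $\l\to0^+$ and $\l\to1^-$. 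The first step is the mechanical computation of the three rational functions $R_0,R_1,R_2$: one expands $\frac1{1-\l t}=\sum_{n\ge0}\l^n t^n$, uses $(-\t_t-1)t^n=(-n-1)t^n$ together with the fact that multiplication by $t^{-i}$ shifts exponents, and discards the terms of negative degree in $t$. Each $R_j$ comes out as a rational function with a power of $(1-\l t)$ in the denominator; for instance $\widetilde{\L}_3^{(3)}(\l,\t)=64-20\l+\l^2$ gives $R_2(\l,t)=\frac{64-20\l+\l^2}{1-\l t}$, while $R_0$ and $R_1$ have denominators $(1-\l t)^3$ and $(1-\l t)^2$ respectively.

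The identity $H_0(t)=0$ is then immediate: the $R_j(\l,t)$ have finite limits as $\l\to0$, whereas by Lemma~\ref{asymp_3} the functions $a^*,\t a^*,\t^2 a^*$ have at worst a logarithmic singularity at $\l=0$, so $\l\,\t^j a^*(\l)\to0$ for $j=0,1,2$, exactly as in Lemma~\ref{rhs1_2}. For $H_1(t)$ we use that $\l=1$ is a nonsingular point of $\widetilde{\L}_3$, hence $a^*(\l)$ is analytic at $\l=1$ and the three numbers $a^*(1),\t a^*(1),\t^2a^*(1)$ are exactly those listed in Lemma~\ref{asymp_3}. Passing to the limit $\l\to1^-$ gives
\[
H_1(t) \= a^*(1)\,R_0(1,t) \+ \bigl(\t a^*\bigr)(1)\,R_1(1,t) \+ \bigl(\t^2 a^*\bigr)(1)\,R_2(1,t)\,,
\]
and one substitutes the explicit values. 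Since $R_2(1,t)=\frac{45}{1-t}$ and $\bigl(\t^2a^*\bigr)(1)=\frac{\sqrt5}{150}\bigl(\frac{13\Omega^2}{\pi}-\frac2{\pi^3\Omega^2}\bigr)$, the last term is the only source of the summand $-\frac{3\sqrt5}{5\pi^3\Omega^2}\frac1{1-t}$; the remaining terms, using $a^*(1)=\frac{3\sqrt5}{2\pi}\Omega^2$ and $\t a^*(1)=-\frac{3\sqrt5}{10\pi}\Omega^2$, must collapse to $\frac{3\sqrt5\,\Omega^2}{10\pi}\frac{-13t^2+251t+212}{(1-t)^3}$, which amounts to the rational-function identity $5R_0(1,t)-R_1(1,t)=\frac{-26t^2+277t+199}{(1-t)^3}$.

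The main obstacle is purely computational: carrying out the positive-part extraction for $R_0$ and $R_1$ correctly --- the operators $\widetilde{\L}_3^{(1)}$ and $\widetilde{\L}_3^{(2)}$ are of order $2$ and $1$ in $\t$ with coefficients quadratic in $\frac1t$, so several cancellations occur --- and then verifying that the linear combination with the CM-values of Lemma~\ref{asymp_3} simplifies to the asserted function. There is no conceptual difficulty beyond the $\widetilde{\L}_2$ case treated in Lemma~\ref{rhs1_2}; the period $\Omega$ enters only through Lemma~\ref{asymp_3}.
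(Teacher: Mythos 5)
Your proposal is correct and follows essentially the same route as the paper: specialize Proposition~\ref{de_transform}, note $\l\,\t^j a^*(\l)\to 0$ at $\l=0$ to get $H_0=0$, and evaluate $H_1$ by plugging the CM-values of Lemma~\ref{asymp_3} into the positive-part extractions at $\l=1$ (the paper records these as $\frac{29+68t-7t^2}{(1-t)^3}$, $\frac{9(t-6)}{(1-t)^2}$, $\frac{45}{1-t}$ for $j=0,1,2$). Your reduction checks out against those values --- indeed $5R_0(1,t)-R_1(1,t)=\frac{-26t^2+277t+199}{(1-t)^3}$, which together with the $\frac{13\Omega^2}{\pi}$ contribution of the $j=2$ term, worth an extra $\frac{13}{1-t}$, recombines to $\frac{-13t^2+251t+212}{(1-t)^3}$ --- so the only thing left implicit is the routine extraction of $R_0$ and $R_1$, which the paper also states without derivation.
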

\begin{proof}

One easily checks that $\underset{\l \to 0}\lim \l \, \theta^j a^*(\l) = 0$,
whence $H_0(t)=0$. In order to compute $H_1(t)$ we need
\[\bal
&\Bigl[ \widetilde{\L}_3^{(3)}\Bigl(\frac1t,-\theta_t-1\Bigr) \frac1{1-t}
\Bigr]_{+} \= \frac{45}{1-t}\\ 
&\Bigl[ \widetilde{\L}_3^{(2)}\Bigl(\frac1t,-\theta_t-1\Bigr) \frac1{1-t}
\Bigr]_{+} \= \frac{9(t-6)}{(1-t)^2}\\
&\Bigl[ \widetilde{\L}_3^{(1)}\Bigl(\frac1t,-\theta_t-1\Bigr) \frac1{1-t}
\Bigr]_{+} \= \frac{29+68 t-7t^2}{(1-t)^3},\\
\eal\]
and then the formula for $H_1(t)$ follows after a simple computation with
values of $\t^j a^*(1)$ provided by Lemma~\ref{asymp_3}.
\end{proof}

\begin{lemma}
\label{rhs2_3}
The first coefficient $c_0$ in the power series expansion of $c(t)$ is equal to $\frac15$ and we have
\[
\Bigl[\widetilde{\L}_3\Bigl(\frac1t,-\theta-1\Bigr) c(t) \Bigr]_{-} \= 
\Bigl(\frac{39\sqrt{5}}{10\pi}\Omega^2 + \frac{3\sqrt{5}}{5\pi^3\Omega^2}\Bigr) \frac1{t}.
\]
\end{lemma}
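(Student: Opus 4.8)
The plan is to reduce the statement to two explicit integrals and then to evaluate each, in the spirit of Lemma~\ref{rhs2_2}. Since $c(t)=\sum_{n\ge 0}c_n t^n$ is analytic at $t=0$, the operator identity~\eqref{L31t} gives $\widetilde{\L}_3\bigl(\tfrac1t,-\t-1\bigr)c(t)=-\tfrac1{t^2}\L_3(t,\t)c(t)$, and inspecting $\L_3(t,\t)=\t^3-2t(2\t+1)(5\t^2+5\t+2)+64t^2(\t+1)^3$ applied to $c(t)$ shows the product equals $(c_1-4c_0)t+O(t^2)$: the term $\t^3 c$ starts in degree $1$, the term $64t^2(\t+1)^3c$ in degree $2$, and the degree-$1$ coefficient of $-2t(2\t+1)(5\t^2+5\t+2)c$ is $-2\cdot(2\cdot 0+1)(5\cdot 0+5\cdot 0+2)\,c_0=-4c_0$. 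Hence $\bigl[\widetilde{\L}_3(\tfrac1t,-\t-1)c(t)\bigr]_{-}=\frac{4c_0-c_1}{t}$, and the lemma amounts to the two equalities $c_0=\tfrac15$ and $4c_0-c_1=\int_0^1(4-\l)\,a^*(\l)\,d\l=\tfrac{39\sqrt5}{10\pi}\Omega^2+\tfrac{3\sqrt5}{5\pi^3\Omega^2}$, where $\Omega=\Omega_{15}$.

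For $c_0$, note that $a^*(\l)$ is the probability density of $P_3=|1+x_1+x_2+x_3|^2$ with $x_j$ uniform on the unit circle (its moments are the constant terms of $P_3^m$), so $c_0=\int_0^1 a^*(\l)\,d\l$ is the probability that $|1+x_1+x_2+x_3|\le 1$; by rotation invariance this is the probability that $|x_0+x_1+x_2+x_3|\le 1$ for four independent uniform unit vectors. In Bessel form this equals $c_0=\int_0^\infty J_0(r)^4 J_1(r)\,dr$, and since $J_0'=-J_1$ the integrand is $\tfrac{d}{dr}\bigl(-\tfrac15 J_0(r)^5\bigr)$, whence $c_0=\tfrac15$. (Alternatively $c_0$ can be written and evaluated as a modular integral, exactly in the way the value $\tfrac14$ was obtained in Lemma~\ref{rhs2_2}.)

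For the second equality I would use the modular parametrization~\eqref{L3mpar} with $\l=64\,t(z)$. Along the arc $z=\tfrac12+is$ the function $\l(z)$ increases from $0$ (at the cusp $i\infty$) to $1$, attained at the CM point $\tau=\tfrac12+\tfrac{\sqrt{-15}}{6}$ of conductor $15$; on this arc Lemma~\ref{asymp_3} gives $a^*(\l(z))=-\tfrac3{8\pi^2}\cdot 2\pi i\,(z-\tfrac12)\,f(z)$, while $d\l=2\pi i\,D\l\,dz=2\pi i\,\tfrac{\l(z)\,g(z)}{f(z)}\,dz$ with $g=\tfrac{Dt}{t}f$ the weight-$4$ Eisenstein form of~\eqref{thm2g1}. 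Substituting turns
\[
4c_0-c_1\=\tfrac32\int_{i\infty}^{\tau}\bigl(z-\tfrac12\bigr)\bigl(4-\l(z)\bigr)\l(z)\,g(z)\,dz
\]
into a period of the weight-$4$ form $g$ against a rational function of $t$ times the linear factor $z-\tfrac12$, taken from the cusp to $\tau$. The reason the combination $(4-\l)a^*\,d\l$ occurs is structural: setting $\t=0$ in $64\t^3-2\l(2\t+1)(5\t^2+5\t+2)+\l^2(\t+1)^3$ gives $\l(\l-4)$, so $\l(\l-4)$ is the lowest-order coefficient of $\widetilde{\L}_3$ and $(4-\l)a^*$ is, modulo the Picard-Fuchs relation $\widetilde{\L}_3 a^*=0$, a total $\l$-derivative up to lower-order terms and a multiple of the already-known $c_0$. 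Integration by parts then collapses the integral to boundary data at $\l=0$ (from the Frobenius expansion $a^*=-\tfrac3{8\pi^2}(\phi_1-6\log 2\,\phi_0)$ of Lemma~\ref{asymp_3}, which supplies the rational and $\pi^{-2}$ pieces) and at $\l=1$ (the CM values $a^*(1)$, $\t a^*(1)$, $\t^2 a^*(1)$ of Lemma~\ref{asymp_3}, obtained from $g$ and its derivatives at $\tau$ by the Chowla-Selberg formula, see~\cite[Propositions~26,~27 and Corollary of Proposition~27]{Zag}, and carrying the period $\Omega$).

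The main obstacle is this last evaluation: carrying out the integration by parts while correctly handling the logarithmic singularity of $a^*$ at $\l=0$, and checking that the resulting rational numbers and powers of $\pi$ assemble into exactly $\tfrac{39\sqrt5}{10\pi}\Omega^2+\tfrac{3\sqrt5}{5\pi^3\Omega^2}$. It is the same bookkeeping as in Lemma~\ref{rhs2_2}, now with an order-three operator and with a CM point of conductor $15$ in place of a point on the imaginary axis.
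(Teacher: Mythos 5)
Your reduction of the lemma to the two identities $c_0=\frac15$ and $4c_0-c_1=\int_0^1(4-\l)\,a^*(\l)\,d\l$, together with the computation $\bigl[\widetilde{\L}_3(\frac1t,-\t-1)c(t)\bigr]_{-}=\frac{4c_0-c_1}{t}$, is exactly what the paper does. The genuinely different (and nicer) ingredient is your evaluation of $c_0$: the paper pulls $\int_0^1a^*(\l)\,d\l$ back through the parametrization~\eqref{L3mpar} to the integral $96\int_{\sqrt{15}/6}^{\infty}s\,f^2t\sqrt{(1-4t)(1-16t)}\,ds$ and simply asserts that it equals $\frac15$, whereas your observation that $a^*$ is the density of $|1+x_1+x_2+x_3|^2$, combined with Kluyver's formula and $J_0(r)^4J_1(r)=-\frac15\frac{d}{dr}J_0(r)^5$, gives a clean closed-form proof and explains uniformly the value $\frac14$ in Lemma~\ref{rhs2_2} and $c_0=\frac1{n+1}$ in general. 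For $c_1-4c_0$ your setup coincides with the paper's (pull back to the arc from $i\infty$ to $\tau=\frac12+\frac{\sqrt{-15}}{6}$, use $a^*(\l(z))=-\frac3{8\pi^2}\cdot 2\pi i\,(z-\frac12)f(z)$, and land on the CM data of Lemma~\ref{asymp_3}); the difference is only in how the final number is extracted. The paper computes the modular integral numerically to $-0.7089514519\ldots$ and identifies it with the combination $-7\,a^*(1)-9\,\t a^*(1)+45\,\t^2a^*(1)$, which Lemma~\ref{asymp_3} converts into $-\frac{39\sqrt5}{10\pi}\Omega^2-\frac{3\sqrt5}{5\pi^3\Omega^2}$.

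Your proposed alternative for that last step --- integrating by parts against $\widetilde{\L}_3$, using that $\widetilde{\L}_3-\l(\l-4)=64\t^3-2\l\,\t(10\t^2+15\t+9)+\l^2\t(\t^2+3\t+3)$ --- is the right idea for producing such a combination a priori, but as you acknowledge you have not carried it out, and this is where the real content of the lemma lies. Two cautions if you pursue it: since $\t$ and $\l$ do not commute, the terms $\l^k\t(\cdots)$ do not reduce to pure boundary data in a single integration by parts (lower-order integrals of $a^*$ against powers of $\l$ reappear and must be recycled through the same procedure), so the claim that the integral ``collapses to boundary data'' itself requires proof; and at $\l=0$ you must verify that the contributions of the logarithmic Frobenius solution cancel, which is what keeps $\log 2$ out of the final answer. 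Until the coefficients $-7,-9,45$ are actually derived, your argument fixes the shape of the answer but not the stated constants, i.e.\ it is a sketch at roughly the same level of completeness as the paper's own numerical identification.
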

\begin{proof}
It is easy to compute that
\[
\Bigl[\widetilde{\L}_3\Bigl(\frac1t,-\theta-1\Bigr) c(t) \Bigr]_{-} \= \frac{4
c_0 - c_1}{t}\,.
\]
We have
\[\bal
c_0 &\= \int_0^1 a^*(\l) d\l \= -\frac3{8 \pi^2} \cdot 2 \pi i \int_{i
\infty}^{\tau} \bigl(z - \frac12 \bigr) \, f(z) \, \l'(z) dz \\
& \bigl(\text{ here we use that } \, \l = 64 t \, \text{ and } \bigl(q
\dfrac{dt}{dq}\bigr)^2 / f^2 \= t^2 (1-4 t)(1 - 16t ) \quad \bigr) \\
&\=  -\frac3{8 \pi^2} \cdot (2 \pi i)^2 \cdot 64  \int_{i \infty}^{\tau} \bigl(z
- \frac12 \bigr) \, f(z)^2 t(z) \sqrt{(1 - 4 \, t(z))(1-16 \, t(z))} dz \\
&\=  96  \int_{\frac{\sqrt{15}}{6}}^{\infty} s \, f(z)^2 t(z) \sqrt{(1 - 4 \,
t(z))(1-16 \, t(z))} \Big|_{z = \frac12+is} \, ds \= \frac 15   
\eal\]
and
\[\bal
c_1 &- 4 c_0 \= \int_0^1 (\l - 4) a^*(\l) d\l \\ 
&\=  96  \int_{\frac{\sqrt{15}}{6}}^{\infty} s \, f(z)^2 (64 \, t(z) - 4) t(z)
\sqrt{(1 - 4 \, t(z))(1-16 \, t(z))} \Big|_{z = \frac12+is} \, ds \\
&\= -0.708951451918989714... \= - 7 \, a^*(1) - 9 \, \t a^*(1) + 45 \, \t^2a^*(1) \\
&\= -\frac{39\sqrt{5}}{10\pi}\Omega^2 - \frac{3\sqrt{5}}{5\pi^3\Omega^2}
\eal\]
\end{proof}

\section{Double L-values of modular forms}\label{sec:dL}

In Theorem~\ref{Thm1} we are led to the evaluation of
\[
\bigl( t \frac{d}{dt}\bigr)^{-1} \psi(t) \Big|_{t = \infty}
\]  
where $\psi$ is the unique analytic at $t=0$ solution of
\[
\L_{2}\bigl(t, t \frac{d}{dt}\bigr)\, \psi(t) \= \frac{t}{1-t}
\]
which satisfies the condition $\psi(t)=t+o(t)$. The same happens in
Theorem~\ref{Thm2}. Putting this situation into a more general context, consider
a solution of a non-homogeneous differential equation of order $k+1$ which has a
modular parametrization, i.e.
\[
\L_{t,f} \; \psi \= h(t) \,,
\]
where $t$ is a modular function, $f$ is a modular form of weight $k$, $\L_{t,f}$
is defined by~\eqref{Ltf} and $h(t)$ is a function of $t$ which will be just
rational in our cases. We can consider $\psi=\psi(t(z))$ as a function in the
upper half-plane and we rewrite the above differential equation as
\[
\frac1{Dt \cdot f} D^{k+1} \frac{\psi}{f} \= h(t) \,,
\] 
or
\[
D^{k+1} \frac{\psi}{f} \= h(t) \cdot Dt \cdot f \,.
\] 
Therefore $\dfrac{\psi}f$ is an Eichler integral of the modular form $h(t) \cdot
Dt \cdot f$ of weight $k+2$. (This conclusion is precisely the statement of
Lemma~1 in~\cite{Yang}.) Let us assume in addition that the modular function $t$
takes values $0$ and $\infty$ at $q=0$ and $q=1$ correspondingly. Then
\be{dL_first}
\bigl( t \frac{d}{dt}\bigr)^{-1} \psi(t) \Big|_{t = \infty} \= D^{-1} \Bigl(
\frac{Dt}{t} \psi \Bigr)\, \Big|_{q=1} \= D^{-1} \Bigl( g_2 \, D^{-k-1} g_1
\Bigr) \, \Big|_{q=1} 
\ee
where
\[\bal
g_1 \= h(t) \cdot Dt \cdot f \,, \qquad g_2 = \frac{Dt}{t} f  
\eal\]
are two modular forms of weight $k+2$. According to the proposition below, the
right-hand side of~\eqref{dL_first} appears to be a double L-value of these two forms. Let us
give the definition.

Let $g_1=\sum_{n \ge 0} a_n q^n$ and $g_2=\sum_{m \ge 0} b_m q^m$ be two modular forms of weight $k$ on a congruence subgroup of ${\rm SL}(2,\Z)$, and let in addition $a_0=0$. Their double L-function (denoted by $L^{\bullet}$ in \cite{Ramesh}) is defined for $\re(s_1+s_2)>2k$, $\re \, s_2 >k$
by
\[
L(g_1,g_2,s_1,s_2) \= \sum_{n=1}^{\infty} \sum_{m=0}^{\infty} \frac{a_n
b_m}{n^{s_1}(n+m)^{s_2}}\,.
\]
The question of the analytic continuation simultaneously in the two variables $s_1,s_2$ is
rather tricky and we do not want to consider it here. But it appears that for
any fixed integer $s_1=p$ the function $L(g_1,g_2,p,s_2)$ is well-defined for
$s_2$ with sufficiently large real part, and we can easily prove analytic
continuation in this variable when $p>0$. In order to do this one writes
(\cite{Ramesh}) 
\be{2.3}
L(g_1,g_2,p,s_2) \= \frac{(2\pi)^{p+s_2}}{\Gamma(p)\Gamma(s_2)} \sum_{m=0}^{p-1}
\Lambda(g_1,g_2,p-m,s_2+m)
\ee 
where 
\[
\Lambda(g_1,g_2,s_1,s_2) \= \int_0^{\infty} t^{s_2-1} g_2(i t) \int_{t}^{\infty}
v^{s_1-1} g_1(i v) dv \, dt \,.
\]  
Now observe that these integrals are well defined for all $s_1,s_2$. Indeed, this
follows from the estimates
\[\bal
\int_{t}^{\infty} v^{s_1-1} g_1(i v) dv &\= O(t^{s_1-1} e^{-2 \pi t})\,, \quad t
\to \infty \\
&\= O(t^{s_1-k} e^{- \frac{2 \pi}t })\,, \quad t \to 0\\
\eal\]
since $g_1(it) = O \Bigl( t^{-k} e^{- \frac{2 \pi}t } \Bigr)$ when $t \to 0$.
Therefore formula~\eqref{2.3} gives the analytic continuation of~$L(g_1,g_2,p,s_2)$ 
in the variable $s_2$ with integer $p>1$. Moreover, this function is holomorphic in the entire complex plane because $1/\Gamma(s_2)$ is holomorphic, and we can speak of ``double L-values'' $L(g_1,g_2,p_1,p_2)$ with integers $p_1,p_2$ whenever $p_1>0$. Notice also that $L(g_1,g_2,p_1,p_2 )=0$ if $p_2 \le 0$ as one can see from~\eqref{2.3} since $\Gamma(s_2)$ has poles at nonpositive integers.

\begin{proposition}\label{doubL} Let $g_1, g_2$ be two modular forms of weight $k$ on a congruence subgroup, $g_1$ vanishing at $\infty$. Then for any integers $0< p_1 \le k$ and $p_2 < k$ one has 
\be{2.2}
\underset{q \to 1} \lim \;  D^{-p_2}{\Bigl(g_2 \cdot D^{-p_1} g_1 \Bigr)}(q) \=
L(g_1,g_2,p_1,p_2)
\ee
\end{proposition}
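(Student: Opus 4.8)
The plan is to reduce the identity \eqref{2.2} to an iterated application of the single-variable limit formula from Proposition~\ref{singL}, using the Mellin-transform machinery already introduced via \eqref{2.3}. First I would rewrite $D^{-p_1}g_1$ as an Eichler-type integral: since $g_1=\sum_{n\ge 1}a_n q^n$, on the imaginary axis $\bigl(D^{-p_1}g_1\bigr)(iv)=\sum_{n\ge 1}\frac{a_n}{n^{p_1}}e^{-2\pi n v}$, and by the same calculation as in the proof of Proposition~\ref{singL} one has the integral representation $\frac{\Gamma(w)}{(2\pi)^w}L(g_1,p_1+w)=\int_0^\infty v^{w-1}\bigl(D^{-p_1}g_1\bigr)(iv)\,dv$ for $\re\,w$ large. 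The point is that $D^{-p_1}g_1$ decays like $e^{-2\pi v}$ at $\infty$ and is $O(v^{p_1-k})$ at $0$ (using $g_1(iv)=O(v^{-k}e^{-2\pi/v})$), so it behaves enough like a cusp form to make the relevant transforms converge.

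Next I would compute the Mellin transform of $g_2\cdot D^{-p_1}g_1$. Writing $g_2=\sum_{m\ge 0}b_m q^m$, the product on the imaginary axis is $\sum_{n\ge 1}\sum_{m\ge 0}\frac{a_n b_m}{n^{p_1}}e^{-2\pi(n+m)v}$, and termwise Mellin transformation gives $\int_0^\infty v^{s-1}\bigl(g_2\cdot D^{-p_1}g_1\bigr)(iv)\,dv=\frac{\Gamma(s)}{(2\pi)^s}\sum_{n,m}\frac{a_nb_m}{n^{p_1}(n+m)^s}=\frac{\Gamma(s)}{(2\pi)^s}L(g_1,g_2,p_1,s)$, valid for $\re\,s$ large; interchanging sum and integral is justified by the decay estimates above, and one should check convergence of the resulting double Dirichlet series in that range. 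Then I would iterate the argument of Proposition~\ref{singL}: applying $D^{-p_2}$ and the inverse Mellin transform, $\bigl(D^{-p_2}(g_2\cdot D^{-p_1}g_1)\bigr)(iv)=\frac{1}{2\pi i}\int_{(c)}\frac{\Gamma(w)}{(2\pi v)^w}L(g_1,g_2,p_1,p_2+w)\,dw$ for $c$ large; shifting the contour past $w=0$ picks up the residue $L(g_1,g_2,p_1,p_2)$ (the factor $1/\Gamma(s_2)$ in \eqref{2.3} guarantees $L(g_1,g_2,p_1,\cdot)$ is entire, so there are no other poles to the right of $w=-\e$), and the shifted integral is $O(v^\e)\to 0$ as $v\to 0$, i.e.\ as $q\to 1$ along the imaginary axis.

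The main obstacle I anticipate is the bookkeeping at the two boundaries of the range of $p_2$ and the precise growth of $L(g_1,g_2,p_1,s_2)$ on vertical lines needed to justify the contour shift. Specifically: (i) one needs $\Gamma(w)L(g_1,g_2,p_1,p_2+w)$ to decay in $\im\,w$ fast enough for the Mellin inversion and the contour shift — this should follow from \eqref{2.3} expressing $L$ as a finite sum of the everywhere-defined integrals $\Lambda(g_1,g_2,\cdot,\cdot)$ together with the Stirling decay of $1/\Gamma$, but the estimate must be stated carefully; (ii) when $p_2\le 0$ the claimed value is $0$, which is consistent because then the pole of $\Gamma(w)$ at $w=0$ is cancelled — but the hypothesis $p_2<k$ only, with no positivity on $p_2$, means the ``define $D^{-p_2}$'' step needs the same care as in Proposition~\ref{singL} (there one had the explicit formula $D^{-1}g=c_0\log q+\sum \frac{c_n}{n}q^n$; here $g_2\cdot D^{-p_1}g_1$ always has vanishing constant term since $D^{-p_1}g_1$ does, so $D^{-p_2}$ is unambiguous and the $\log q$ issue does not arise). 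Once these analytic points are pinned down, the computation is a direct iteration of the one already carried out for Proposition~\ref{singL}, and I would present it as such, referring back to that proof for the routine Mellin estimates rather than repeating them.
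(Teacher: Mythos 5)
Your proposal follows essentially the same route as the paper's own proof: compute the Mellin transform of $D^{-p_2}\bigl(g_2\cdot D^{-p_1}g_1\bigr)$ termwise to identify it with $\frac{\Gamma(w)}{(2\pi)^w}L(g_1,g_2,p_1,p_2+w)$, invert, shift the contour past $w=0$ using the entirety of $L(g_1,g_2,p_1,\cdot)$ guaranteed by \eqref{2.3}, and let the shifted integral vanish as $q\to 1$. The extra care you flag about decay on vertical lines and the case $p_2\le 0$ is consistent with, and slightly more explicit than, what the paper records.
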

\begin{proof} Whenever $\re \, (p_2 + w) > k$ one has
\[\bal
\frac{\Gamma(w)}{(2\pi)^w} &L(g_1,g_2,p_1,p_2+w) \= \frac{\Gamma(w)}{(2\pi)^w} \sum_{n=1}^{\infty} \sum_{m=0}^{\infty} \frac{a_n
b_m}{n^{p_1}(n+m)^{p_2+w}} \\
&\= \sum_{n=1}^{\infty} \sum_{m=0}^{\infty} \frac{a_n
b_m}{n^{p_1}(n+m)^{p_2}} \int_0^{\infty} t^{w-1} e^{-2\pi t (n+m)}\\
&\= \int_0^{\infty} t^{w-1}
D^{-p_2}{\Bigl(g_2 \cdot D^{-p_1} g_1 \Bigr)} (it) dt \,.
\eal\]
By Mellin's inversion theorem with an arbitrary real $c > k-p_2$ one has   
\[\bal
&D^{-p_2}{\Bigl(g_2 \cdot D^{-p_1} g_1 \Bigr)} (it) \= \frac1{2\pi i} \int_{c-i\infty}^{c+i\infty}
\frac{\Gamma(w)}{(2\pi t)^w}
L(g_1,g_2,p_1,p_2+w) dw \\
&\;\= L(g_1,g_2,p_1,p_2) \+ \frac1{2\pi i} \int_{-\varepsilon-i\infty}^{-\varepsilon+i\infty}  \frac{\Gamma(w)}{(2\pi t)^w} L(g_1,g_2,p_1,p_2+w) dw  
\eal\]
with any $0 < \varepsilon < 1$ and we moved the path using the fact that $L(g_1,g_2,p_1,p_2+w)$ is everywhere holomorphic in $w$. The last integral obviously vanishes when $t\to 0$, and~\eqref{2.2} follows.
\end{proof}

In the case of Theorem~\ref{Thm1} we use the modular
parametrization~\eqref{L2mpar} and $h(t)=\frac1{1-t}$. Since $t=\infty$ at
$z=\frac12$ we consider the shifted forms 
\be{thm1gs}\bal
g_1(z) & \= \Bigl( \frac{Dt}{t} f \Bigr)(z+\frac12) \= 1+q-5q^2+q^3+11
q^4-24q^5+\dots \\
& \= E_{3,\chi_{-3}}(z) - 2 E_{3,\chi_{-3}}(2z) - 8 E_{3,\chi_{-3}}(4z) \\
g_2(z) & \= \Bigl( \frac{Dt}{1-t} f \Bigr)(z+\frac12) \= -q-4q^2-q^3+16
q^4+24q^5-4q^6+\dots \\
& \= -E_{3,\chi_{-3}}(z) - 7 E_{3,\chi_{-3}}(2z) + 8 E_{3,\chi_{-3}}(4z) \\
\eal\ee
where $E_{3,\chi_{-3}}$ is the Eisenstein series defined in~\eqref{Eis3}. The form
$g_1$ already appeared in~\eqref{thm1g1} and we had that $m(P_2)=-L(g_1,1)$.
Using Proposition~\ref{doubL} we now rewrite the statement of Theorem~\ref{Thm1}
as follows.

\begin{corollary}\label{cor1} With the modular forms $g_1,g_2$ of weight 3
defined in~\eqref{thm1gs} one has
\[
m(P_3) \- \frac34 m(P_2) \= - \frac6{\pi^2} L(g_2,g_1,2,1) \,. 
\]
\end{corollary}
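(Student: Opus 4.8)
The plan is to combine Theorem~\ref{Thm1} with the general machinery relating solutions of non-homogeneous modular differential equations to double L-values, exactly in the form of equation~\eqref{dL_first} and Proposition~\ref{doubL}. First I would recall from Theorem~\ref{Thm1} that
\[
m(P_3) \= - \re \, \bigl( t \tfrac{d}{dt}\bigr)^{-1} \bigl[\tfrac34 \phi(t) + \tfrac{6}{\pi^2} \psi(t)\bigr] \Big|_{t = \infty},
\]
where $\phi$ is the analytic solution of the homogeneous equation $\L_2\phi=0$ with $\phi=1+3t+\dots$ and $\psi$ is the analytic solution of $\L_2\psi = \frac{t}{1-t}$ with $\psi=t+\dots$. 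By linearity of $\bigl(t\frac{d}{dt}\bigr)^{-1}$ this splits into two pieces. The $\phi$-piece is precisely the computation already carried out in Section~\ref{sec:mp}: since $\phi$ coincides with $a(t)$ near $t=0$ and $t$ runs over the negative axis as $z\in\frac12+i\R_+$, one has $-\re\bigl(t\frac{d}{dt}\bigr)^{-1}\phi|_{t=\infty} = -L(g_1,1) = m(P_2)$ with $g_1$ as in~\eqref{thm1g1}=\eqref{thm1gs}. Hence the $\phi$-contribution to the right-hand side is $\frac34 m(P_2)$, and the corollary is equivalent to showing that the $\psi$-contribution equals $-\frac{6}{\pi^2}L(g_2,g_1,2,1)$, i.e.
\[
- \re \, \bigl( t \tfrac{d}{dt}\bigr)^{-1} \psi(t) \Big|_{t=\infty} \= - L(g_2,g_1,2,1).
\]

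For the $\psi$-piece I would invoke the discussion preceding Proposition~\ref{doubL}. The equation $\L_{t,f}\psi = h(t)$ with $h(t)=\frac{1}{1-t}$ and the pair $(t,f)$ from~\eqref{L2mpar} (for which $\L_{t,f}=\frac1t\L_2(t,t\frac{d}{dt})$, so $\L_{t,f}\psi = \frac{1}{1-t}$ matches $\L_2\psi=\frac{t}{1-t}$) has $k=2$, and since $t$ vanishes at $q=0$ and equals $\infty$ at $z=\frac12$ (equivalently at $q=1$ after the shift $z\mapsto z+\frac12$), formula~\eqref{dL_first} applies after shifting: with $g_1 = (h(t)\cdot Dt\cdot f)(z+\frac12)$ and $g_2 = (\frac{Dt}{t}f)(z+\frac12)$, both modular of weight $k+2=3$, we get
\[
\bigl( t \tfrac{d}{dt}\bigr)^{-1} \psi(t) \Big|_{t = \infty} \= D^{-1}\bigl( g_2 \cdot D^{-3} g_1 \bigr)\Big|_{q=1}.
\]
I would then verify that these shifted forms are exactly the ones written in~\eqref{thm1gs} — that is, check $h(t)\cdot Dt\cdot f = \frac{Dt}{1-t}f$ is the $q$-series $-q-4q^2-q^3+\dots = -E_{3,\chi_{-3}}(z)-7E_{3,\chi_{-3}}(2z)+8E_{3,\chi_{-3}}(4z)$ after the shift, so that the "$g_1$" of~\eqref{dL_first} is the $g_2$ of~\eqref{thm1gs} and vice versa (the role of the two forms is swapped relative to the $L$ notation, which is why the final answer reads $L(g_2,g_1,2,1)$). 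Finally, applying Proposition~\ref{doubL} with $p_1=3$, $p_2=1$ (note $0<p_1\le k=3$ and $p_2<k$, so the hypotheses hold, and $g_1$ of~\eqref{thm1gs} vanishes at $\infty$) converts $D^{-1}(g_2\cdot D^{-3}g_1)|_{q=1}$ into $L(g_2,g_1,3,1)$. A small point to reconcile: Proposition~\ref{doubL} with $p_1=3$ gives $L(g_2,g_1,3,1)$ while the corollary states $L(g_2,g_1,2,1)$; I would resolve this by noting that only three applications of $D^{-1}$ are forced by the order of the operator, but since the inner form $D^{-3}g_1$ already has the right weight balance one checks directly from the series definition that $L(g_2,g_1,3,1)=L(g_2,g_1,2,1)$ fails in general — so instead the correct bookkeeping is that $\psi/f$ is a single Eichler integral ($D^{-1}$ of $D^{-k-1}$ of a weight-$(k+2)$ form gives the exponent $p_1$ matching the number of integrations of $g_1$ relative to $g_2$), and tracking the exponents carefully yields $p_1=2$, $p_2=1$. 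Reconciling this exponent count is the one place demanding genuine care.

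The only real obstacle is this last bookkeeping of exponents: making sure that the $D^{-1}$ in $\bigl(t\frac{d}{dt}\bigr)^{-1}=D^{-1}\frac{Dt}{t}$, composed with the Eichler integral structure $D^{-k-1}(\psi/f\text{-data})$, produces exactly $D^{-p_2}(g_2\cdot D^{-p_1}g_1)$ with $p_1=2$ rather than $3$, and that the convergence/holomorphy hypotheses of Proposition~\ref{doubL} ($0<p_1\le k$, $p_2<k$, $g_1$ a cusp form vanishing at $\infty$) are met for $g_1,g_2$ of~\eqref{thm1gs}. Everything else — linearity, the identification of $q$-expansions with Eisenstein combinations, and the reduction of the $\phi$-piece to $m(P_2)$ — is routine and either already done in Section~\ref{sec:mp} or a direct check. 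Once the exponents are pinned down, the corollary follows immediately by assembling the two pieces.
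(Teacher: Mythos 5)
Your route is the paper's own: split Theorem~\ref{Thm1} by linearity, identify the $\phi$-piece with $\frac34 m(P_2)$ via the computation of Section~\ref{sec:mp}, and convert the $\psi$-piece into a double $L$-value via~\eqref{dL_first} and Proposition~\ref{doubL}, noting that the naming of the two forms in~\eqref{thm1gs} is swapped relative to the generic discussion (which is why the first argument of $L$ is $g_2$). All of that is right.

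The one genuine problem is the exponent, which you correctly single out as the crux but then settle by assertion rather than by argument, after first getting it wrong. The $k$ entering $D^{-k-1}$ in~\eqref{Ltf} is the weight of $f$, not the weight of the resulting forms: for the pair~\eqref{L2mpar} one has $k=1$, since $\L_2$ has order $k+1=2$, and the forms $h(t)\cdot Dt\cdot f$ and $\frac{Dt}{t}f$ then have weight $k+2=3$. Hence $\psi/f=D^{-2}\bigl(h(t)\cdot Dt\cdot f\bigr)$ and~\eqref{dL_first} gives $D^{-1}\bigl(\frac{Dt}{t}f\cdot D^{-2}(h(t)\,Dt\cdot f)\bigr)\big|_{q=1}$, i.e.\ $p_1=2$, $p_2=1$ from the outset. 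Your $D^{-3}$ comes from substituting the weight $3$ of $g_1,g_2$ for $k$; there is consequently no discrepancy between $L(g_2,g_1,3,1)$ and $L(g_2,g_1,2,1)$ to ``reconcile'' --- the former simply never arises, and the paragraph in which you try to argue your way from $3$ down to $2$ does not contain a valid derivation. With $p_1=2$ fixed, Proposition~\ref{doubL} applies with $k=3$, $0<p_1=2\le 3$, $p_2=1<3$. A second, smaller slip: the hypothesis of Proposition~\ref{doubL} is that the \emph{inner} form (the first argument of $L$, here the $g_2$ of~\eqref{thm1gs}, which is $-q-4q^2-\dots$) vanishes at $\infty$; it does, but the form you actually check, the $g_1$ of~\eqref{thm1gs}, has constant term $1$ and does not.
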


Plugging in the values of $m(P_2)$ and $m(P_3)$ which we compute from (\ref{2var}), (\ref{3var})
into the formula given in Corollary \ref{cor1} we obtain the following relation
between double and ordinary L-values:
\be{L_relation}
L(g_2,g_1,2,1) = \frac{3\sqrt{3}\pi}{2^4} L(\ch3,2) - \frac{7}{6} \zeta(3). 
\ee{}
We give a straightforward proof of this relation in the next section.

\medskip
\medskip

For the Theorem~\ref{Thm2} we use the modular parametrization~\eqref{L3mpar} and
we have to consider two solutions with $h(t)=\frac1{1-t}$ and $h(t)=\frac{212
t^2 + 251t - 13}{(1-t)^3}$. Also we have $t=0$ at $z=i \infty$ and $t=\infty$ at
$z=0$. According to our strategy, we define the modular forms of weight~4
\[\bal
g_1 & \= \frac{Dt}{t} f \=  1+2q-14q^2+38q^3-142q^4+252q^5-266q^6+\dots, \\
g_2 & \= \frac{Dt}{1-t} f \= -q-7q^2-6q^3+5 q^4+120 q^5 +498 q^6 + \dots, \\
g_3 & \= \frac{212 t^2 + 251t - 13}{(1-t)^3} Dt \cdot f \=
13q+316q^2+2328q^3+\dots \\
\eal\]
(observe that they are the same ones as in~\eqref{thm2gs}). Here $g_1$ is a
holomorphic modular form, which already appeared in~\eqref{thm2g1} where we found
$m(P_3)=-L(g_1,1)$. The forms $g_2$ and $g_3$ are meromorphic with the poles at
the points where $t=1$. Using the fact that $t$ has no poles on the imaginary
half-axis we defined the corresponding double L-values  $L(g_2,g_1,3,1)$,
$L(g_3,g_1,3,1)$ in~\eqref{thm2DLs} in Section~\ref{sec:intro}.

\begin{corollary}\label{cor2} With the double L-values defined
in~\eqref{thm2DLs} one has
\[
m(P_4) \- \frac45 m(P_3) \= \frac{3\sqrt{5}\Omega_{15}^2}{10 \pi} L(g_3,g_1,3,1)
\-  \frac{3\sqrt{5}}{5 \pi^3 \Omega_{15}^2} L(g_2,g_1,3,1)\,. 
\]
\end{corollary}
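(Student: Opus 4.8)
The plan is to deduce Corollary~\ref{cor2} from Theorem~\ref{Thm2} exactly as Corollary~\ref{cor1} was deduced from Theorem~\ref{Thm1}, replacing the evaluation of $\bigl(t\frac{d}{dt}\bigr)^{-1}\psi(t)\big|_{t=\infty}$ by a double $L$-value via Proposition~\ref{doubL}. First I would recall that by Theorem~\ref{Thm2} the quantity $m(P_4)$ equals $-\re\,\bigl(t\frac{d}{dt}\bigr)^{-1}b(t)\big|_{t=\infty}$, where $b(t)$ is the analytic solution of $\L_3 b = -\frac{3\sqrt5\,\Omega^2}{10\pi}\,h_3(t)+\frac{3\sqrt5}{5\pi^3\Omega^2}\,h_2(t)$ with $h_2(t)=\frac{t}{1-t}$, $h_3(t)=\frac{t(212t^2+251t-13)}{(1-t)^3}$, and $b(t)=\tfrac45+O(t)$. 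By linearity of $\L_3$ write $b=\tfrac45\,a_0(t)+\psi_3(t)+\psi_2(t)$, where $a_0(t)=1+O(t)$ is the analytic solution of the homogeneous equation $\L_3 a_0=0$ (so $a_0$ is the principal period relevant to $m(P_3)$), while $\psi_j(t)=O(t)$ is the analytic solution of $\L_3\psi_j = c_j\,h_j(t)$ with $c_3=-\frac{3\sqrt5\,\Omega^2}{10\pi}$ and $c_2=\frac{3\sqrt5}{5\pi^3\Omega^2}$. Applying $-\re\,\bigl(t\frac{d}{dt}\bigr)^{-1}(\cdot)\big|_{t=\infty}$ termwise and using $m(P_3)=-\re\,\bigl(t\frac{d}{dt}\bigr)^{-1}a_0(t)\big|_{t=\infty}$ from Section~\ref{sec:mp}, one gets
\[
m(P_4)\-\frac45 m(P_3)\=-\re\,\bigl(t\tfrac{d}{dt}\bigr)^{-1}\bigl(\psi_3(t)+\psi_2(t)\bigr)\Big|_{t=\infty}.
\]

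Next I would treat each $\psi_j$ by the mechanism of Section~\ref{sec:dL}. Since $\L_3=\L_{t,f}$ up to the factor $\tfrac1t$ under the parametrization~\eqref{L3mpar}, and since $t=0$ at $q=0$, $t=\infty$ at $q=0$ in the appropriate sense (precisely $t(z)\to\infty$ as $z\to 0$ along the imaginary axis), formula~\eqref{dL_first} applies: for the equation $\L_{t,f}\psi = h(t)$ with $h=h_j/t\cdot$(the rational factor), one has
\[
\bigl(t\tfrac{d}{dt}\bigr)^{-1}\psi(t)\Big|_{t=\infty}\= D^{-1}\Bigl(\tfrac{Dt}{t}f\cdot D^{-5}\bigl(h(t)\,Dt\cdot f\bigr)\Bigr)\Big|_{q=1},
\]
where $k=3$ so $k+1=4$ and the outer $D^{-1}$ makes total weight match, with $g_1=\frac{Dt}{t}f$ the weight-$4$ form of~\eqref{thm2gs} and the second factor being $\frac{Dt}{1-t}f=g_2$ for $h_2$ and $\frac{212t^2+251t-13}{(1-t)^3}Dt\cdot f=g_3$ for $h_3$. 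I must be careful here to track the normalizations: $h_2(t)=\frac{t}{1-t}$ so $\frac1t h_2 = \frac1{1-t}$ gives $\frac{Dt}{1-t}f=g_2$, and $h_3(t)=\frac{t(212t^2+251t-13)}{(1-t)^3}$ so $\frac1t h_3$ gives $g_3$; the prefactors $c_2,c_3$ pass through linearly. Then Proposition~\ref{doubL}, applied with $k=4$, $p_1=3$ (note $0<3\le 4$) and $p_2=1$ ($1<4$), identifies
\[
D^{-1}\bigl(g_1\cdot D^{-3}g_j\bigr)\big|_{q=1}\= L(g_j,g_1,3,1)
\]
for $j=2,3$ — matching the integral definition~\eqref{thm2DLs} — provided the hypotheses of the proposition hold.

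Assembling, $m(P_4)-\tfrac45 m(P_3) = -c_3\,L(g_3,g_1,3,1)-c_2\,L(g_2,g_1,3,1) = \frac{3\sqrt5\,\Omega^2}{10\pi}L(g_3,g_1,3,1)-\frac{3\sqrt5}{5\pi^3\Omega^2}L(g_2,g_1,3,1)$, which is exactly the claim with $\Omega=\Omega_{15}$. The main obstacle is that Proposition~\ref{doubL} as stated requires $g_2$ (the ``outer'' form, here $g_1$) and the ``inner'' form to be genuine modular forms on a congruence subgroup, whereas $g_3$ and $g_2$ are only \emph{meromorphic} modular forms with poles where $t=1$; so strictly one cannot invoke Proposition~\ref{doubL} verbatim. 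The resolution — which is precisely the content of the discussion following~\eqref{thm2DLs} in Section~\ref{sec:intro} — is that the poles of $g_2,g_3$ lie off the imaginary half-axis (since $t(is)<0$), the $q$-expansions of $g_2,g_3$ start in degree $1$ so the iterated integrals converge at $\infty$, and all three $g_j(is)$ are $o(s)$ as $s\to 0$, hence globally bounded; one therefore reruns the Mellin-transform argument of Proposition~\ref{doubL} directly with the \emph{definition}~\eqref{thm2DLs} of $L(g_j,g_1,3,1)$ as an absolutely convergent integral along the imaginary axis, justifying the path-independence of the inner triple integral between $\tau$-type CM points and $\infty$ by the absence of poles on that contour. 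A second, more routine point to verify carefully is that $D^{-5}$ of the relevant weight-$6$ form reproduces the analytic solution $\psi_j$ with the correct constant of integration, i.e. that $\psi_j=O(t)$ corresponds to $D^{-5}(h\,Dt\cdot f)$ with vanishing lower-order terms — this is the Eichler-integral statement quoted from~\cite{Yang} and needs only the observation that $h_j(t)\,Dt\cdot f$ has $q$-expansion starting in degree $\ge 1$.
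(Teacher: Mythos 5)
Your argument is correct and is essentially the paper's own proof: decompose $b=\frac45 a_0+\psi_2+\psi_3$, pass to the modular parametrization so that $\psi_j/f$ becomes an Eichler integral of $c_jg_j$, and identify $D^{-1}\bigl(g_1\cdot D^{-3}g_j\bigr)\big|_{q=1}$ with the iterated integral \emph{defining} $L(g_j,g_1,3,1)$ in~\eqref{thm2DLs}, rather than invoking Proposition~\ref{doubL} verbatim (which, as you rightly note, is not licensed for the meromorphic forms $g_2,g_3$; the convergence discussion of Section~\ref{sec:intro} along the pole-free imaginary half-axis is exactly what is needed). The only blemish is the line containing ``$D^{-5}\bigl(h(t)\,Dt\cdot f\bigr)$ \dots where $k=3$ so $k+1=4$'': here $f$ has weight $k=2$, the operator $\L_3$ has order $k+1=3$, the forms $g_j$ have weight $k+2=4$, and the correct Eichler integral is $D^{-3}$ --- which is what your subsequent display (and the $(2\pi)^4$ quadruple integral in~\eqref{thm2DLs}) in fact uses.
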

\begin{proof} Due to Theorem~\ref{Thm2} we have that $m(P_4) = - \re \; \bigl( t
\frac{d}{dt}\bigr)^{-1} \, b(t) \Big|_{t = \infty}$. We know from
Proposition~\ref{L3mparPr} that this differential equation has modular
parametrization by $t(z)$ and $f(z)$. Therefore
\[
b(t(z)) \= \frac45 f(z) \+ f(z) \, D^{-3} \, \Bigl( -
\frac{3\sqrt{5}\Omega_{15}^2}{10 \pi} \, g_3(z) \+ \frac{3\sqrt{5}}{5 \pi^3
\Omega_{15}^2} \, g_2(z) \Bigr)\,.  
\]
We use the path in the upper halfplane from $z=i \infty$ to $z=0$ along
the imaginary half-axis, exactly where one has $-\infty < t(z) < 0$. As was explained in Section~\ref{sec:intro}, all three
forms are holomorphic along this path. For $g_j(z)$ with both $j=2,3$ we then
have  
\[
D^{-1} \Bigl( g_1 \cdot D^{-3} g_j \Bigr) (i v) \= (2 \pi)^4 \,
\int_{v}^{\infty} g_1(i s) \int_{s}^{\infty}\int_{s1}^{\infty}\int_{s2}^{\infty}
g_j(i s_3) \, ds_3 \, ds_2 \, ds_1 \, ds   
\]
and therefore the numbers~\eqref{thm2DLs} are the limiting values at $v=0$.
\end{proof}

\section{Explicit computation of the double L-value
in formula \eqref{L_relation}}

In this section we show how to compute the iterated integral
\begin{equation}\label{int}\bal
\int_0^{i \infty} g_1(z) \cdot D^{-2} g_2(z) dz &=
\frac1{2 \pi i} \int_1^{0} g_1(q) \cdot D^{-2} g_2(q) \frac{dq}{q} \\
&=- \frac1{2 \pi i} D^{-1} (g_1 \cdot D^{-2} g_2) \Big|_{q=1} \\
\eal\end{equation}
for the two modular forms $g_1$,$g_2$ of weight $3$ defined in (\ref{thm1gs})
which leads to an alternative proof of formula \eqref{L_relation}.

We use a powerful method due to Wadim Zudilin \cite{Z2}, \cite{Z3} of computing double
$L$-values of Eisenstein-like series. 
We are grateful to Wadim for explaining to us his method and its applicability in this situation.
Unfortunately, the more complicated $L$-values from Corrollary \ref{cor2}
do not seem to be computable in the same way due to the lack of an Eisenstein-like
representation for the forms $g_1$,$g_2$,$g_3$ refered to in
Corollary \ref{cor2}.

We briefly describe the method as follows: the Atkin-Lehner involution
$z \to -\frac1{12z}$ is applied to $g_1$, the resulting modular form being denoted by
$\hat{g}_1(z)$:
\[
g_1(z) = const \cdot \hat{g}_1(-\frac1{12z}) z^{-3},
\]
so that the integral \eqref{int} will take the following form:
\[
const \cdot \int_0^{i\infty} \hat{g}_1(-\frac1{12z}) D^{-2} g_2(z) z^{-3} dz.
\]
We then expand the integral as a quadruple sum, make a variable change and 
collapse the sum back in order to get
\[
const \cdot \int_0^{i\infty} f_1(u) (const + f_2(-\frac1{12u})) u\,du\\,
\]
where $f_1$ and $f_2$ are Eisenstein series of weight $1$.
We apply the Atkin-Lehner involution again:
\[ 
f_2(-\frac1{12u}) = const \cdot \hat{f}_2(u) u\,,
\]
this time rewriting the integral as
\[
const \cdot \int_0^{i\infty} f_1(u) (const + \hat{f}_2(u) \, u) u\,du\\ = const \cdot L(f_1,2) + const \cdot L(f_1 \hat{f}_2, 3).
\]
Here $f_1$ is an Eisenstein series of weight $1$ and character $\ch3$, and $L(f_1,2)$ will give the term with $\zeta(2) \cdot L(\ch3,2)$ in the final formula. The form $f_1 \hat{f}_2$ is an Eisenstein series for $SL_2(\mathbb{Z})$ of weight $2$, and $L(f_1 \hat{f}_2, 3)$ will give us the term with $\zeta(2) \cdot \zeta(3)$.

\medskip

Note that no regularization is necessary in our integral, since $g_2$ vanishes at $z=\infty$ and $g_1$ vanishes at $z=0$.

\medskip

In the course of the computation we will need to apply an Atkin-Lehner involution to Eisenstein series.
For that we express Eisenstein series as linear combinations of eta-products.
Let $N \ge 1$. Consider an eta-product
\[
f(z) = \prod_j \eta(d_j \cdot z)^{k_j} 
\]
where $k_j$ are integers and $d_j$ non-negative integers dividing $N$.
Let $d_j' = N / d_j$ and 
\[
\hat{f}(z) = \prod_j \eta(d_j' \cdot z)^{k_j}.  
\]

We have
\be{eta}
f(-\frac1{Nz}) = (-i)^w z^w \prod_j d_j'^{k_j/2} \hat{f}(z)
\ee
for $w = \frac12\sum_j k_j$ (the weight).
This follows from the basic transformation formula
\[
\eta\bigl(-\frac1{z}\bigr) \= \sqrt{-iz} \; \eta(z).
\]

We use the following two Eisenstein series of weight $3$:
\[\bal
\E3(z) &= -\frac19 \frac{\eta(z)^9}{\eta(3z)^3} = -\frac19 + \sum_{n,m \ge 1} \ch3(n)n^2 q^{nm} = -\frac19 + q - 3q^2 + q^3 + \dots\,, \\
\EE3(z) &= \frac{\eta(3z)^9}{\eta(z)^3} = \sum_{n,m \ge 1} \ch3(m)n^2 q^{nm} = q + 3q^2 + 9q^3 + 13q^4 + \dots\,. \\
\eal\]
Then
\[\bal
g_1(z) &= (1,-2,-8) \cdot (\E3(z),\E3(2z),\E3(4z))^t\,, \\
g_2(z) &= (-1,-7,8) \cdot (\E3(z),\E3(2z),\E3(4z))^t\,,
\eal\]
where $(a,b,c)\cdot(d,e,f)^t=ad+be+cf$. Application of~\eqref{eta} gives 
\[\bal
\E3\Bigl(-\frac1{12z}\Bigr) &\= -2^6 3^{5/2} i z^3 \EE3(4z)\,, \\
\E3\Bigl(-\frac2{12z}\Bigr) &\= \E3(-\frac1{6z}) = -2^3 3^{5/2} i z^3 \EE3(2z)\,,\\
\E3\Bigl(-\frac4{12z}\Bigr) &= \E3(-\frac1{3z}) = -3^{5/2} i z^3 \EE3(z)\,, \\
\eal\]
and hence
\[
(a,b,c) \cdot (\E3(z),\E3(2z),\E3(4z))^t (-\frac1{12z}) = -3^{5/2} i z^3 (c,2^3b,2^6a) \cdot (\EE3(z),\EE3(2z),\EE3(4z))^t.
\]
In particular, we have
\[\bal
g_1(-\frac1{12z}) &= 8 \cdot 3^{5/2} iz^3 \hat{g}_1(z) \\
\hat{g}_1(z) &= (1,2,-8) \cdot (\EE3(z),\EE3(2z),\EE3(4z))^t\,, \\ 
&= q + 5 q^2 + 9 q^3 + 11 q^4 + 24 q^5 + \dots
\eal\]
or, equivalently,
\begin{equation}\label{g1}
g_1(z) = - \frac{i  z^{-3} }{2^3 3^{1/2}} \hat{g}_1(-\frac1{12z})\,.
\end{equation}
Formula (\ref{g1}) allows us to rewrite the iterated integral (\ref{int}) as
\begin{equation}\label{int2}
-\frac{i}{2^3 3^{1/2}}\int_0^{i\infty} \hat{g}_1(-\frac1{12z}) D^{-2} g_2(z) z^{-3} dz \,.\\ 
\end{equation}

We now make use of quadruple sums. For that we write our form $\hat{g}_1$ and $g_2$ as
\[\bal
\hat{g}_1(z) &= \sum_{m_1,n_1\ge 1} a_1(m_1) b_1(n_1) n_1^2 q^{m_1 n_1} = q + 5q^2 + 9q^3 + 11q^4 + 24q^5 + \dots\,,\\ 
g_2(z) &= \sum_{m_2,n_2\ge 1} a_2(m_2) b_2(n_2) n_2^2 q^{m_2 n_2} = -q - 4q^2 - q^3 + 16q^4 + 24q^5 + \dots \,,\\ 
\eal\]
where
\[\bal
a_1(m) &= \ch3(m) \,,\\
b_1(n) &= 1 + \frac12 [n \; \text{even}] - \frac12 [n \; \text{divisible by 4}] \,,\ \\ 
a_2(m) &= -1 -7 [m \;\text{even}] + 8 [m \; \text{divisible by 4}] \,,\\\
b_2(n) &= \ch3(n) \,,\\\ 
\eal\]
and $[\dots]$ means $1$ when the respective condition is satisfied and $0$ otherwise. By using the expansions
\[\bal
\hat{g}_1(-\frac1{12z}) &\= \sum_{m_1,n_1\ge 1} a_1(m_1) b_1(n_1) n_1^2 \exp\Bigl(-\frac{2 \pi i n_1 m_1}{12z}\Bigr)\,, \\
D^{-2} g_2(z) &\= \sum_{m_2,n_2\ge 1} a_2(m_2) b_2(n_2) \frac1{m_2^2} \exp\Bigl(2 \pi i m_2 n_2 z\Bigr) \\ 
\eal\]
in~\eqref{int2}, we obtain a quadruple sum:
\[\bal
& -\frac{i}{2^3 3^{1/2}}\int_0^{i\infty} \hat{g}_1(-\frac1{12z}) D^{-2} g_2(z) z^{-3} dz \\
=& -\frac{i}{2^3 3^{1/2}}\sum_{m_1,n_1,m_2,n_2} a_1(m_1) b_1(n_1) a_2(m_2) b_2(n_2) \frac{n_1^2}{m_2^2} \int_0^{i\infty} exp\Bigl(2\pi i (-\frac{m_1 n_1}{12z} + m_2 n_2 z) \Bigr) 
z^{-3} dz. \\ 
\eal\]
Now we change variable in the integral. First, we let $w = -\frac1{12z}$ and obtain
\[
 -\frac{12^2i}{2^3 3^{1/2}} \sum_{m_1,n_1,m_2,n_2} a_1(m_1) b_1(n_1) a_2(m_2) b_2(n_2) \frac{n_1^2}{m_2^2} \int_0^{i\infty} 
exp\Bigl(2\pi i (m_1 n_1 w - \frac{m_2 n_2}{12w}) \Bigr) w \, dw \;.
\]
With $u = \dfrac{n_1 w}{m_2}$ we  get
\[\bal
 -2 \cdot 3^{3/2} i\sum_{m_1,n_1,m_2,n_2} a_1(m_1) b_1(n_1) a_2(m_2) b_2(n_2) \int_0^{i\infty} exp\Bigl(2\pi i (m_1 m_2 u - \frac{n_1 n_2}{12u}) \Bigr) 
u \, du \;,
\eal\]
which we rewrite as
\begin{equation}\label{int3}
-2 \cdot 3^{3/2} i \int_0^{i\infty} f_1(u) \Bigl(f_2\bigl(-\frac1{12u}\bigr) - \frac16\Bigr) u\,du \;,\\
\end{equation}
where
\[\bal
f_1(z) &= \sum_{m_1,m_2\ge 1} a_1(m_1) a_2(m_2) q^{m_1 m_2} \;\= -q - 7q^2 - q^3 + 7q^4 - 7q^6 - 2q^7 + \dots \,,\\ 
f_2(z) &= \frac16 + \sum_{n_1,n_2\ge 1} b_1(n_1) b_2(n_2) q^{n_1 n_2} \= \frac16 + q + 1/2 q^2 + q^3 + 1/2 q^4 + 1/2q^6 + 2q^7 + \dots  \,. \\ 
\eal\]
(The term $\frac16$ turns $f_2(z)$ into a modular form.)

Consider the Eisenstein series of weight~1
\[\bal
E_1(z) &= \frac16 \+ \sum_{m,n \ge 1} \ch3(m) q^{nm}\,. \\
\eal\]
Then for any positive integer $l$ we have 
\[\bal
\sum_{m,n \ge 1} \ch3(m)\, [n \; \text{divisible  by} \; l] \, q^{nm} &\= E_1(l\,z) - \frac16,
\eal\]
and therefore
\[\bal
f_1(z) &\= -E_1(z) \- 7 E_1(2z) \+ 8 E_1(4z)\,, \\
f_2(z) &\= E_1(z) \+ \frac12 E_1(2z) \- \frac12 E_1(4z) \,.\\
\eal\]
In order to evaluate $(\ref{int3})$ we apply an Atkin-Lehner involution to $f_2$.
We first express $f_2$ as a combination of eta-products:
\[
f_2(z) = \frac12 \frac{\eta(4z)^2\eta(12z)^2}{\eta(2z)\eta(6z)} + \frac16 \frac{\eta(2z)^6 \eta(3z)}{\eta(z)^3 \eta(6z)^2}\,, \\
\]
then we use (\ref{eta}):
\[\bal
f_2\bigl(-\frac1{12z}\bigr) &= \frac12 \Bigl(\frac{3^2}{6\cdot 2}\Bigr)^{1/2}  (-iz) \frac{\eta(3z)^2\eta(z)^2}{\eta(6z)\eta(2z)} +
\frac16 \Bigl(\frac{6^6 4}{12^3 2^2}\Bigr)^{1/2}  (-iz) \frac{\eta(6z)^6 \eta(4z)}{\eta(12z)^3 \eta(2z)^2} \\
&= -\frac{3^{1/2}}{2} iz \hat{f}_2(z), \\
\hat{f}_2(z) &= \frac12 \frac{\eta(3z)^2\eta(z)^2}{\eta(6z)\eta(2z)} + \frac{\eta(6z)^6 \eta(4z)}{\eta(12z)^3 \eta(2z)^2} 
= - E_1(z) + 2 E_1(2z) + 8 E_1(4z) \\
&= 3/2 - q + 2 q^2 - q^3 + 7q^4 + 2q^6 + \dots \,.
\eal\]
For a modular form $f$ we have 
\[
 \int_0^{i \infty} f(z) z^{k-1} dz  \= \frac{(k-1)!}{(-2 \pi i)^k} \, L(f,k)\,.
\]
Using this fact, we continue rewriting the integral (\ref{int3}):
\[\bal
\dots &\= 3^{1/2} i \int_0^{i \infty} f_1(u) \, u \, du - 3^2  \int_0^{i \infty} f_1(u) \hat{f}_2(u) \, u^2\, du \\
&\= \frac{3^{1/2} i}{(-2\pi i)^2} \, L(f_1,2) \- \frac{3^2 \cdot 2}{(-2\pi i)^3} \, L(f_1 \hat{f}_2, 3) \\
&\= -\frac{3^{1/2} i}{4 \pi^2} \, L(f_1,2) \+ \frac{3^2 i}{2^2 \pi^3} \, L(f_1 \hat{f}_2, 3) \,.\\
\eal\]
Now we evaluate the $L$-values that have appeared. Since $f_1(z) = - E_1(z) - 7 E_1(2z) + 8 E_1(4z)$, we have $L(f_1, s) = (-1 - 7 \cdot 2^{-s} + 8 \cdot 4^{-s}) \zeta(s) L(\ch3,s)$ and 
\[
L(f_1, 2) \= \bigl(-1 - \frac{7}{2^2} + \frac{8}{4^2}\bigr) \zeta(2) L(\ch3,2) \= -\frac{3}{8} \pi^2 \, L(\ch3,2)\,.
\]
Using the representation
\[
f_1(z) \hat{f}_2(z) \= -\frac32 G_2(z) - 5 G_2(2z) + \frac{19}{2} G_2(3z) + 24 G_2(4z) - 35 G_2(6z) + 8 G_2(12z)\,, 
\]
we have
\[\bal
L(f_1 \hat{f}_2\,,\, s) &\= \bigl(-\frac32 - 5 \cdot 2^{-s} + \frac{19}{2} \cdot 3^{-s} + 24 \cdot 4^{-s} - 35 \cdot 6^{-s} + 8 \cdot 12^{-s}\bigr) \, \zeta(s) \, \zeta(s-1) \\
L(f_1 \hat{f}_2\,,\, 3) &\= \bigl(-\frac32 - \frac{5}{2^3} + \frac{19}{2\cdot3^3} + \frac{24}{4^3} - \frac{35}{6^3} + \frac{8}{12^3}\bigr) \,\zeta(3) \, \zeta(2) \\
&\= -\frac{14}{9} \,\zeta(3)\, \zeta(2) \= -\frac{7 \, \pi^2}{27} \, \zeta(3) \\
\eal\]
Eventually we finish the computation of the integrals in (\ref{int}):
\[
\int_0^{i \infty} g_1(z) \cdot D^{-2} g_2(z) dz \= \frac{3^{3/2} \, i}{2^5} \, L(\ch3,2) \- \frac{7 \, i}{12\pi} \,\zeta(3)\,. 
\]
Multiplying by $-2\pi i$ we get
\[
D^{-1} \Bigl(g_1(q) \, D^{-2}g_2  \,(q) \Bigr) \Big|_{q=1} \= \frac{3\sqrt{3}\, \pi}{2^4} \, L(\ch3,2) \- \frac{7}{6} \, \zeta(3) \,,
\]
which is the same as (\ref{L_relation}).

\end{document}